\newcommand{\uarc}[2]{\draw (#1,0) parabola bend ($#1*(0.5,-0.1)+#2*(0.5,0.1)$)    (#2,0);}
\numberwithin{equation}{section} 
\numberwithin{figure}{section} 
 \theoremstyle{plain}    
 \newtheorem{thm}{Theorem} 
 \theoremstyle{plain}    
 \newtheorem{lem}{Lemma} 
 \theoremstyle{plain}    
 \newtheorem{cor}{Corollary} 
 \theoremstyle{remark}
 \newtheorem{rem}{Remark}
 \theoremstyle{definition}
 \newtheorem{defn}{Definition}
 \theoremstyle{definition}
 \theoremstyle{definition}
  \newtheorem{example}{Example}
  \theoremstyle{definition}
\newcommand{\bm}{{\mathbf m}}
\newcommand{\bor}{\mbox{ b}}
\newcommand{\be}{{\mathbf e}}
\newcommand{\bn}{{\mathbf n}}
\newcommand{\ccomp}{\operatorname{cc}}
\newcommand{\chr}{\operatorname{char}}
\newcommand{\Bone}{\mathbb{1}}
\newcommand{\BZ}{\mathbb{Z}}
\newcommand{\BF}{\mathbb{F}}
\newcommand{\bpm}{\begin{pmatrix}}
\newcommand{\epm}{\end{pmatrix}}
\newcommand{\bi}{\begin{itemize}}
\newcommand{\ei}{\end{itemize}}
\newcommand{\ben}{\begin{enumerate}}
\newcommand{\een}{\end{enumerate}}
\newcommand{\rank}{\operatorname{B-rank}}
\newcommand{\KK}{\Omega}
\newcommand{\coef}{\operatorname{coef}}
\newcommand{\ind}{\operatorname{ind}}
\newcommand{\bc}{\operatorname{b}}
\begin{document}

\title{On Borel orbits of quadratic forms in characteristic $2$}
\author{ Yasmine B. Sanderson}
\date{Dec. 8, 2025}
\maketitle
\begin{center}Abstract\end{center}
{We consider the spherical variety of quadratic forms over a quadratically closed field of characteristic $2$, and determine its orbits for the action of the Borel subgroup of upper triangular matrices. We exhibit a connection between these orbits and the Catalan triangle numbers. In addition, we describe explicitly a natural Weyl group action on the set of Borel orbit double covers.}
\noindent
\section{Introduction}

Let $X = G/H$ be a homogeneous variety for a connected reductive group $G$ and let $B$ be a Borel subgroup of $G$. When $B$ has a dense orbit in $X$, then $X$ is {\it spherical} and consequently contains only finitely many $B$-orbits~\cite{brion1986quelques}, \cite{vinberg1986complexity}, \cite{knop1995set}. In characteristic $\not= 2$ spherical varieties have many nice properties. For example, there is a natural action of the Weyl group $W$ on the set of $B$-orbits \cite{richardson1990bruhat}, \cite{knop1995set}.
In characteristic $2$, however, this doesn't happen. Instead there is a Weyl group action on the set of double covers of all Borel orbits  \cite{knop1995set}. 

The point of this paper is to determine all $B$-orbits and to  make explicit this Weyl group action for a specific example. More precisely, $X$  is the space of all quadratic forms $q$ in $n$ variables $x_1, \ldots, x_n$ with coefficients in $\KK$, a quadratically closed field of characteristic $2$. We classify the $B$-orbits and show that the number of such orbits of maximal rank are expressed by Catalan and Catalan triangle numbers. We then show that the set of double covers over all such $B$-orbits is in a natural bijection with the set $M_n$ of subsequences $\bm \subset \{1, \ldots, n\}$ of length $\lfloor \frac{n}{2} \rfloor$. We show that the obvious action of $S_n$ on $M_n$ is the action described in \cite{knop1995set}.

We would like to thank the Institute for Advanced Study for its hospitality while working on this paper.

\section{Quadratic Forms and $B$-orbits}

We consider $G = GL(n,\KK)$ and the Borel subgroup $B \subset G$ consisting of upper-triangular matrices. Let $\KK$ be a quadratically closed field of characteristic $2$. Let $V_n$ be the $\KK$-vector space of quadratic forms in $n$ variables:
$$V_n = \{\sum_{1 \leq i \leq j \leq n}c_{i,j}x_ix_j \ \mid \ c_{i,j} \in \KK, \ \forall i,j\}. $$ Then $G$ acts on $V_n$. For an element $g = (g_{i,j}) \in G$ and $x_t$ we have $$g\cdot x_t = \sum_{j=1}^n g_{t,j}x_j.$$ This action is extended to $V_n$ by $g\cdot x_sx_t = (g\cdot x_s)(g\cdot x_t)$. Notice that this is a right-action since $(gh)\cdot x = h\cdot (g\cdot x)$. The action of $B$ on $V_n$ is simply the restriction of this action to $B$: for a generic element $\bor = (b_{i,j})_{1 \leq i \leq j \leq n} \in B$
\begin{equation}\bor \cdot x_sx_t =   \sum_{(s,t)\leq (i,j)}b_{s,i}b_{t,j}x_ix_j.\end{equation}\label{boreleq-1}
 where we use the usual lexicographic ordering on pairs of integers and monomials: $(i,j) < (k,l)$ (or $x_{i}x_j < x_{k}x_l$)  iff $i<k$ or $i=k$ and $j<l$. For a quadratic form $q =  \sum_{1 \leq i \leq j \leq n}c_{i,j}x_ix_j$ we set $\coef(x_{i}x_j,q) = c_{i,j}$. We let  $$\ind(q) := \{ j \ |\  x_j \mbox{ occurs in } q\} = \{ j \ | \ \exists i | c_{i j} \not= 0 \mbox{ or } c_{j i} \not= 0\ \}.$$
 We write $(k,l) \prec (i,j)$ if $(k,l) \geq (i,j)$ or if $(l,k) \geq (i,j)$, in other words when $x_kx_l $ could occur as a monomial in $\bor\cdot x_ix_j$. We now study the $B$-orbits on $V$:
 
 \begin{thm}\label{lem-borbit} {\bf 1.}  Each $B$-orbit $B\cdot q$ in $V$ contains a unique element $q_{\bn}$ in {\bf normal form}, that is, a quadratic form  $q_{\bn} = q_1 + \cdots + q_r$ with $0 \leq r $ such that
     for each $t$, $q_t = \epsilon_tx_{i_t}^2 + \delta_tx_{i_t}x_{j_t}$ where
     \begin{enumerate}
          \item $\epsilon_t, \delta_t \in \{0,1\}$ and $(\epsilon_t,\delta_t) \not= (0,0)$, i.e., $q_t \in \{ x_{i_t}^2, x_{i_t}x_{j_t}, x_{i_t}^2 +  x_{i_t}x_{j_t}\ \}$, 
            \item the sets $\{ i_t, j_t\}$ are pairwise disjoint and $j_t = i_t$ if and only if $\delta_t= 0$,
        \item $i_1 < i_2 < \cdots < i_r$ and for every $t$, $i_t \leq j_t$,
   
\item \label{lem-borbit_cond3} (C1) if $\epsilon_t=1$ and $\delta_t = 0$ then $\epsilon_s=0$ for all $s>t$ ,
  \item  \label{lem-borbit_cond4} (C2) if $s \not= t$ with $i_s < i_t < j_t < j_s$ then $\epsilon_s\epsilon_t = 0$.
  \end{enumerate}   The $q_i$ are called the {\bf normal components} of $q_{\bn}$ and every $(i_t,j_t)$ is an {\bf index pair} of $q_{\bn}$.
  
 {\bf 2.} Let $q$ be a normal quadratic form and $B_q$ the stabilizer of $q$. If $\bor \in B_q$ then $\bor = (b_{i,j})$ satisfies:
  \begin{enumerate}
  \item $b_{j_t z} = 0$ for all $z \not= j_1, j_2, \ldots, j_t$,
  \item if $\epsilon_t = 1$ then for every $s<t$ we have $\epsilon_s b_{i_s i_t} = 0$.
   
 \end{enumerate}
\end{thm}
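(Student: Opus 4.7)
My plan is: for Part 1, produce a normal-form representative of each $B$-orbit via an explicit reduction and prove uniqueness by tracking the lex-smallest monomial; for Part 2, expand $\bor\cdot q=q$ and read off the resulting equations in lex-decreasing order on monomials. For existence in Part 1, I would induct on $|\ind(q)|$. Set $i_1:=\min\ind(q)$ and split
\[q \;=\; c\,x_{i_1}^2 \;+\; x_{i_1}\ell \;+\; q^{\flat},\]
where $\ell$ is a linear form in $x_{i_1+1},\ldots,x_n$ and $q^{\flat}$ does not involve $x_{i_1}$. Since $\KK$ is quadratically closed, scaling by the diagonal part of $B$ normalises $c$ to $\epsilon_1\in\{0,1\}$. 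If $\ell\ne 0$, set $j_1:=\min\ind(\ell)$; a strictly upper-triangular element of $B$ acting on $x_{j_1}$---absorbing the higher terms of $\ell$ into a redefined $x_{j_1}$---reduces $\ell$ to $\delta_1 x_{j_1}$ with $\delta_1=1$. If $\ell=0$, set $j_1:=i_1$ and $\delta_1:=0$. This produces the first normal component $q_1$, and iterating on $q-q_1$ produces the remaining components. Conditions C1 and C2 emerge exactly as the configurations in which no further $B$-reduction is possible: any move that would erase a later $\epsilon_t x_{i_t}^2$ would simultaneously re-introduce a cross term violating an earlier normalisation, and likewise for nested index pairs.

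For uniqueness, suppose $\bor\cdot q=q'$ with both in normal form. The lex-minimal monomial $m$ of $q$ is $x_{i_1}^2$ when $\epsilon_1=1$ and $x_{i_1}x_{j_1}$ otherwise. A direct computation---using that $\bor$ only raises indices---gives $\coef(m,\bor\cdot q)=b_{i_1,i_1}^2$ or $b_{i_1,i_1}b_{j_1,j_1}$, both nonzero by invertibility of $\bor$. Hence $m$ is also the lex-minimal monomial of $q'$, which pins down $(i_1,j_1,\epsilon_1,\delta_1)$. Stripping off the first component and inducting on the number of normal components yields $q=q'$, so $\bor\in B_q$.

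For Part 2, using $\bor\cdot x_i^2=\sum_k b_{i,k}^2 x_k^2$ (Frobenius in characteristic $2$) and $\bor\cdot(x_ix_j)=\sum_{k,l}b_{i,k}b_{j,l}\,x_kx_l$, I would expand
\[\bor\cdot q \;=\; \sum_t \epsilon_t \sum_{k\ge i_t} b_{i_t,k}^{2}\,x_k^{2} \;+\; \sum_t \delta_t \!\!\!\sum_{k\ge i_t,\, l\ge j_t}\!\!\! b_{i_t,k}\,b_{j_t,l}\,x_kx_l,\]
and equate with $q$ monomial by monomial in lex-decreasing order. Condition (1) follows from $\coef(x_{j_t}x_z,\bor\cdot q)=0$ for $z\notin\{j_1,\ldots,j_t\}$ with $z>j_t$: the only potentially non-vanishing contribution is $\delta_t b_{i_t,i_t}b_{j_t,z}$---contributions from outer components $s<t$ are already killed by an inductive application of (1) at those levels---forcing $b_{j_t,z}=0$. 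Condition (2) comes from $\coef(x_{i_t}^2,\bor\cdot q)=\epsilon_t$: expansion yields $\sum_s \epsilon_s b_{i_s,i_t}^2 + (\text{cross terms})=\epsilon_t$, and when $\epsilon_t=1$ one disentangles via C2 to isolate $\epsilon_s b_{i_s,i_t}=0$ for each $s<t$.

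The delicate step, I expect, is the bookkeeping around condition C2: when an outer pair $(i_s,j_s)$ nests an inner pair $(i_t,j_t)$, the Borel action on $x_{i_s}x_{j_s}$ can land on the same monomial as the action on $x_{i_t}^2$, and it is precisely C2 (together with C1) that ensures these overlapping contributions disentangle cleanly, both to justify uniqueness in Part 1 and to isolate the stabiliser conditions in Part 2. The rest of the argument is careful but essentially mechanical coefficient comparison.
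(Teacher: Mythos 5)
Your outline follows the same broad strategy as the paper (greedy Borel reduction for existence, coefficient comparison with nested inductions for uniqueness and Part 2), but it has genuine gaps at the points where the paper has to work hardest. In the existence step, after normalising $x_{i_1}\ell$ to $x_{i_1}x_{j_1}$ you must also eliminate the terms $x_{j_1}v$ still hiding in $q^{\flat}$ (the paper does this via $x_{i_1}\mapsto x_{i_1}+c_{i_1,j_1}^{-1}v$, legitimate since every index in $v$ exceeds $i_1$); without it, later components need not be disjoint from $\{i_1,j_1\}$ and the recursion does not yield the claimed decomposition. More seriously, your assertion that (C1) and (C2) ``emerge exactly as the configurations in which no further $B$-reduction is possible'' is backwards: the greedy output in general \emph{violates} (C1)/(C2), and such configurations \emph{can} be reduced further. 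Concretely, $x_1^2+x_2^2$ (greedy-reduced, violating (C1)) maps to $x_1^2$ under $x_1\mapsto x_1+x_2$ in characteristic $2$, with no cross term re-introduced, contradicting your stated obstruction. The paper needs two additional explicit moves that your plan omits: $x_{i_t}\mapsto x_{i_t}+\sum_{l>t}\epsilon_l x_{i_l}$ at the first pure square to enforce (C1), and $x_{i_s}\mapsto x_{i_s}+x_{i_t}$, $x_{j_t}\mapsto x_{j_t}+x_{j_s}$ on a nested pair with $\epsilon_s=\delta_s=\epsilon_t=\delta_t=1$ to enforce (C2). Without them, existence of a representative satisfying (C1)/(C2) is simply not established.

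The uniqueness argument as you state it would also fail. When $\epsilon_1=1$ the lex-minimal monomial of $q$ is $x_{i_1}^2$ regardless of $(j_1,\delta_1)$ --- the forms $x_1^2+x_1x_2$, $x_1^2+x_1x_3$ and $x_1^2$ all share it --- so tracking that monomial does not pin down the first normal component, and ``stripping it off'' is not possible anyway: earlier components contaminate later square coefficients, since $\coef(x_{i_t}^2,b\cdot q)$ contains $\sum_{s\le t}\epsilon_s b_{i_s i_t}^2$. The same issue undermines your sketch of Part 2(2): in characteristic $2$ this sum equals $\bigl(\sum_{s\le t}\epsilon_s b_{i_s i_t}\bigr)^2$, so the single equation $\coef(x_{i_t}^2,b\cdot q)=\epsilon_t$ determines only the sum and cannot be ``disentangled via C2'' into the individual vanishings $\epsilon_s b_{i_s i_t}=0$. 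The paper's mechanism, absent from your plan, is: (i) use that $B$ maps squares to sums of squares, so the mixed monomials can be treated separately and all index pairs identified first; (ii) pass to the extension $\tilde{q}=q+x_{i_s}x_{n+1}$ so that every component has a cross term and (C1) becomes an instance of (C2); and (iii) run a sub-induction on $t-k$ using the off-diagonal equations $\coef(x_{i_t}x_{j_{t-m}},p)=0$ to prove $b_{i_k i_t}=0$ whenever $j_k<j_t$, with the case of some $j_k>j_t$, $\epsilon_k=1$ handled directly by (C2). You correctly identify the C2 bookkeeping as the delicate point, but naming the difficulty is not the same as resolving it: as written, neither the uniqueness in Part 1 nor condition (2) of Part 2 is proved.
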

\begin{rem}

To every normal form $q$ we can associate a diagram consisting of a row of $n$ dots (numbered $1, \ldots, n$ from left to right). Dots $s$ and $t$ are connected by an edge if and only if $(s,t)$ is an index pair for $q$. The dot $s$ is filled if and only if $x_{s}^2$ is a summand of $q$. The normal component $x_s^2$ is represented by a filled-in isolated circle. The quadratic normal form $q = x_1^2 + x_1x_5 + x_2x_3 + x_4^2+ x_4x_6 + x_7^2 + x_8x_9$ has diagram
\begin{equation}\label{diag1}
  \begin{tikzpicture}[scale=0.8]
  \tikzset{vertex/.style={draw,circle,inner sep=1pt, fill=black}}
   \tikzset{overtex/.style={draw,circle,inner sep=1pt}}
    
    \node[vertex,label=below:{\tiny $1$}] (1) at (0,0) { };
    \node[overtex,label=below:{\tiny $2$}] (2) at (1,0) { };
    \node[overtex,label=below:{\tiny $3$}] (3) at (2,0) { };
    \node[vertex,label=below:{\tiny $4$}] (4) at (3,0) { };
     \node[overtex,label=below:{\tiny $5$}] (5) at (4,0) { };
    \node[overtex,label=below:{\tiny $6$}] (6) at (5,0) { };
    \node[vertex,label=below:{\tiny $7$}] (7) at (6,0) { };
    \node[overtex,label=below:{\tiny $8$}] (8) at (7,0) { };
     \node[overtex,label=below:{\tiny $9$}] (9) at (8,0) { };

    \draw (1) to [out=25,in=155] (5);
    \draw (2) to [out=30,in=150] (3);
     \draw (4) to [out=30,in=150] (6);
    \draw (8) to [out=30,in=150] (9);
  \end{tikzpicture}
\end{equation}
Notice that (C1) concerns what sort of components occur after a normal component which is a square $x_t^2$.
It says that in the diagram associated to a normal form, the following subdiagrams may occur

\begin{center}
\begin{tikzpicture}[scale=0.8]
  \tikzset{vertex/.style={draw,circle,inner sep=1pt, fill=black}}
   \tikzset{overtex/.style={draw,circle,inner sep=1pt}}
    
    \node[vertex] (1) at (0,0) { };
    \node[overtex] (2) at (1,0) { };

\end{tikzpicture} \hskip3cm \begin{tikzpicture}[scale=0.8]
  \tikzset{vertex/.style={draw,circle,inner sep=1pt, fill=black}}
   \tikzset{overtex/.style={draw,circle,inner sep=1pt}}
    
    \node[vertex] (1) at (0,0) { };
    \node[overtex] (2) at (1,0) { };
    \node[overtex] (3) at (2,0) { };
  
    \draw (2) to [out=30,in=150] (3);
\end{tikzpicture}
\end{center}
\noindent
but not a subdiagram like this:
\begin{center}
\begin{tikzpicture}[scale=0.8]
  \tikzset{vertex/.style={draw,circle,inner sep=1pt, fill=black}}
   \tikzset{overtex/.style={draw,circle,inner sep=1pt}}
    
    \node[vertex] (1) at (0,0) { };
    \node[vertex] (2) at (1,0) { };

\end{tikzpicture} \hskip3cm \begin{tikzpicture}[scale=0.8]
  \tikzset{vertex/.style={draw,circle,inner sep=1pt, fill=black}}
   \tikzset{overtex/.style={draw,circle,inner sep=1pt}}
    
    \node[vertex] (1) at (0,0) { };
    \node[vertex] (2) at (1,0) { };
    \node[overtex] (3) at (2,0) { };
  
    \draw (2) to [out=30,in=150] (3);
\end{tikzpicture}
\end{center}

Notice that (C2) concerns components associated to nested pairs of indices. It says that in the diagram associated to a normal form, the following subdiagrams may occur

\begin{center}
\begin{tikzpicture}[scale=0.8]
  \tikzset{vertex/.style={draw,circle,inner sep=1pt, fill=black}}
   \tikzset{overtex/.style={draw,circle,inner sep=1pt}}
    
    \node[vertex] (1) at (0,0) { };
    \node[overtex] (2) at (1,0) { };
    \node[overtex] (3) at (2,0) { };
    \node[overtex] (4) at (3,0) { };

    \draw (1) to [out=30,in=150] (4);
    \draw (2) to [out=30,in=150] (3);
\end{tikzpicture} \hskip2cm \begin{tikzpicture}[scale=0.8]
  \tikzset{vertex/.style={draw,circle,inner sep=1pt, fill=black}}
   \tikzset{overtex/.style={draw,circle,inner sep=1pt}}
    
    \node[overtex] (1) at (0,0) { };
    \node[vertex] (2) at (1,0) { };
    \node[overtex] (3) at (2,0) { };
    \node[overtex] (4) at (3,0) { };

    \draw (1) to [out=30,in=150] (4);
    \draw (2) to [out=30,in=150] (3);
\end{tikzpicture} \hskip2cm \begin{tikzpicture}[scale=0.8]
  \tikzset{vertex/.style={draw,circle,inner sep=1pt, fill=black}}
   \tikzset{overtex/.style={draw,circle,inner sep=1pt}}
    
    \node[overtex] (1) at (0,0) { };
    \node[overtex] (2) at (1,0) { };
    \node[overtex] (3) at (2,0) { };
    \node[overtex] (4) at (3,0) { };

    \draw (1) to [out=30,in=150] (4);
    \draw (2) to [out=30,in=150] (3);
\end{tikzpicture}    
\end{center}

\noindent
but not a subdiagram like this:
\[ \begin{tikzpicture}[scale=0.8]
  \tikzset{vertex/.style={draw,circle,inner sep=1pt, fill=black}}
   \tikzset{overtex/.style={draw,circle,inner sep=1pt}}
    
    \node[vertex] (1) at (0,0) { };
    \node[vertex] (2) at (1,0) { };
    \node[overtex] (3) at (2,0) { };
    \node[overtex] (4) at (3,0) { };

    \draw (1) to [out=30,in=150] (4);
    \draw (2) to [out=30,in=150] (3);
\end{tikzpicture}    \]
  \end{rem}
\begin{rem}\label{rem-1} If a normal quadratic form $q$ for $GL(n)$ satisfies (C1) then it contains only one pure power $x_{i_s}^2$. Set $j_s := n+1$ and let $\tilde{q} = q + x_{i_s}x_{j_s}$ be its extension to $GL(n+1)$. Then the pair $(i_s,j_s)$ satisfies $i_s < i_t < j_t < j_s= n+1$ for all $t > s$. So $q$ satisfying the (C1) condition is equivalent to $\tilde{q} $ satisfying the (C2) condition. The quadratic form $q$ is normal if and only if $\tilde{q}$ is normal.
  \end{rem}
  \begin{proof}[\bf Proof of 1. (Existence)]
    We will use a recursive method to obtain the desired quadratic form. If $q = 0$ then $B\cdot q = 0$ and $q_{\bf n} = q = 0$ (here $r=0$). So without loss of generality we can assume $q \not= 0$. 

    \noindent
   Set $i = \min (\ind(q))$. Let $j= \min \{\  t > i \ \mid \ c_{i,t} \not=0 \}$ {\it if this exists}. We have two cases to consider:

    {\bf Case $j$ doesn't exist:} That means $c_{i,i} \not= 0$ and $c_{i,t} = 0$ for all $t > i$. We use the (Borel group) operation $x_{i} \mapsto c_{i,i}^{-\frac{1}{2}}x_{i}$ (all other $x_s$ are fixed) to obtain $q = q_1'+ p_1$ where $q_1'= x_{i}^2$ and $i \not\in \ind(p_1)$.

    {\bf Case $j$ does exist:} This means that $c_{i,j} \not= 0$ and $c_{i,s}=0$ for $i < s < j$. We can express $q$ as $$q = c_{i,i}x_{i}^2 + c_{i,j}x_{i}x_{j} + x_{i}u + x_{j}v + w$$ where $i,j$ are not members of $\ind(u)$, $ \ind(v)$, $\ind(w)$.  We use the (Borel group) operation
    $$ x_{i} \mapsto \begin{cases} c_{i,i}^{-1/2}x_{i}+c_{i,j}^{-1}v & \text{ if } c_{i,i} \not= 0 \\ x_{i}+c_{i,j}^{-1}v & \text{ if } c_{i,i}= 0 \end{cases} \hskip 1cm
                   x_{j}  \mapsto \begin{cases} c_{i,j}^{-1}( c_{i,i}^{1/2}x_{j} + u)  & \text{ if } c_{i,i} \not= 0 \\c_{i,j}^{-1}(x_{j} + u) & \text{ if } c_{i,i}= 0 \end{cases} $$
                 on $q$ to obtain
                 \begin{align*} c_{i,i} c_{i,i}^{-1}(x_{i}^2 + c_{i,j}^{-2}v^2) &+ c_{i,j}( c_{i,i}^{-1/2}x_{i}+c_{i,j}^{-1}v)(c_{i,j}^{-1}( c_{i,i}^{1/2}x_{j} + u)) + \\ &+ ( c_{i,i}^{-1/2}x_{i}+c_{i,j}^{-1}v)u + c_{i,j}^{-1}( c_{i,i}^{1/2}x_{j} + u)v + w  \\ & = x_{i}^2 + x_{i}x_{j} + c_{i,j}^{-2}v^2 + c_{i,j}^{-1}uv + w \\ & = x_{i}^2 + x_{i}x_{j} + p_1\end{align*} if $c_{i,i} \not=0$ and similarly in the case $c_{i,i}=0$.
              
    We thereby obtain $q \mapsto q_1'+ p_1$ where $q_1'=  \epsilon_1x_{i}^2 +  \delta_1x_{i}x_{j} $ with $$\epsilon_1 = \begin{cases} 1 & \text{ if } c_{i,i}\not= 0 \\ 0 & \text{ otherwise }. \end{cases} \ \ \ \mbox{ and } \ \ \  \delta_1 = \begin{cases} 1 & \text{ if } c_{i, j}\not= 0 \\ 0 & \text{ otherwise } \end{cases}$$  and where $i,j \not\in \ind(p_1)$.

    We repeat the procedure with $p_1$, thereby obtaining the decomposition $ q_1'+ q_2'+ p_2$ where $\ind(q_2') \cap \ind(p_2) = \emptyset $ and $q_2'$ in the desired form. In at most $n$ steps we obtain a complete decomposition $q'  = q_1'+ \cdots + q_t'$. By construction we have $B\cdot q'  = B \cdot q$. If $q'$ satisfies (C1) and (C2) then we are done and $q_{\bf n} = q'$. If not, we use further Borel actions to obtain the normal form.

 {\bf Step 1 (satisfying (C1)):} If $q'$ satisfies (C1) then we set $q''=q'$ and move on to step 2. Otherwise, let $t:=\min \{\  l \ \mid \ q_l'= x_{i_l}^2\ \}$. By using the Borel action
  $$\bor' : x_{i_t} \mapsto x_{i_t} + \sum_{l > t}\epsilon_l x_{i_l}$$
  we eliminate all summands in $q' $  of the form $x_{i_l}^2$ ($l > t$) without affecting any of the other summands. This gives you a quadratic form $q''$ satisfying (C1). If $q''$ satisfies (C2) then we are done and  $q_{\bf n} = q''$. If not, we use yet another Borel action to obtain the normal form:

  {\bf Step 2 (satisfying (C2)):} Let $s \not= t$ be such that $i_s,j_s,i_t,j_t \in \ind(q'')$ with $i_s < i_t < j_t < j_s$ and $\epsilon_s = \delta_s = \epsilon_t = \delta_t = 1$. We use the (Borel) mapping $x_{i_s} \mapsto x_{i_s} + x_{i_t}$, $x_{j_t} \mapsto x_{j_t} + x_{j_s}$ (and all other variables fixed) to obtain
  \begin{align*}q_s'+ q_t' & = x_{i_s}^2 + x_{i_s}x_{j_s} + x_{i_t}^2 + x_{i_t}x_{j_t}\\  &\mapsto ( x_{i_s} + x_{i_t})^2 + ( x_{i_s} + x_{i_t})x_{j_s} +  x_{i_t}^2 + x_{i_t}( x_{j_t} + x_{j_s})\\ & =  x_{i_s}^2 + x_{i_s}x_{j_s} + x_{i_t}x_{j_t} =: q''_s + q''_l.\end{align*}
  In other words, components with diagrams
  \[ \begin{tikzpicture}[scale=0.8]
  \tikzset{vertex/.style={draw,circle,inner sep=1pt, fill=black}}
   \tikzset{overtex/.style={draw,circle,inner sep=1pt}}
    
    \node[vertex] (1) at (0,0) { };
    \node[vertex] (2) at (1,0) { };
    \node[overtex] (3) at (2,0) { };
    \node[overtex] (4) at (3,0) { };

    \draw (1) to [out=30,in=150] (4);
    \draw (2) to [out=30,in=150] (3);
\end{tikzpicture} \hskip 2cm \begin{tikzpicture}[scale=0.8]
  \tikzset{vertex/.style={draw,circle,inner sep=1pt, fill=black}}
   \tikzset{overtex/.style={draw,circle,inner sep=1pt}}
    
    \node[vertex] (1) at (0,0) { };
    \node[overtex] (2) at (1,0) { };
    \node[overtex] (3) at (2,0) { };
    \node[overtex] (4) at (3,0) { };

    \draw (1) to [out=30,in=150] (4);
    \draw (2) to [out=30,in=150] (3);
\end{tikzpicture}    \]
  belong to the same $B$-orbit. 
We do this for every nested pairs of indices for which (C2) is not satisfied. The resulting quadratic form $q''$ is by construction normal $q_{\bn}:= q''$. In the end, we obtain the desired quadratic normal form $q_{\bf n} \in B  q$.

\bigskip \noindent
{\bf (Uniqueness)} Let $q \not= 0$ be normal and let $p =\bor  q$ be normal for some $\bor \in B$. We have to show that $p=q$. Without loss of generality, we can replace $q$ by $ \tilde{q}$. This allows us to assume that $q$ (being normal) has the form $q = q_1 + \cdots + q_r$ where $q_s = \epsilon_sx_{i_s}^2 + x_{i_s}x_{j_s}$ where $i_1 < i_2 < \cdots < i_r$ and for all $s$ $i_s < j_s$ and $\epsilon_s \in \{0,1\}$ . 
The first part of this proof is to show that $p$ and $q$ have the same index pairs $(i_t,j_t)$, $1 \leq t \leq r$, $i_t \not= j_t$. In other words, we show that $p$ and $q$ have the same {\it mixed monomials} $x_{i_t}x_{j_t}$. Since the Borel action maps squares onto squares:
$$\bor x_i^2 = (\sum_{s \geq i} b_{i s} x_s)^2 =  \sum_{s \geq i} b_{i s}^2 x_s^2, $$
then for this section of the proof, we can, without loss of generality, ignore all pure quadratic $x_s^2$ terms. In particular we can assume that $q_s =  x_{i_s}x_{j_s}$ for all $s$.

By assumption $x_{i_1}x_{j_1}$ is the smallest mixed monomial in $ q$. Since $$\bor q = \underbrace{b_{i_1 i_1}b_{j_1 j_1}}_{\not= 0}x_{i_1}x_{j_1} + \mbox{ higher terms}, $$ we have that $x_{i_1}x_{j_1}$ is the smallest mixed monomial in $p$. So $p_1 = q_1$. The rest of the proof is by induction on $t$.

{\bf Case $t=1$:}  The normality of $p$ puts restrictions on the $\bor_{k l}$. In particular, $\coef(x_{i_1}x_l,p) = 0$ if $l \not= j_1$. Similarly $\coef(x_{k}x_{j_1},p) = 0$ if $k \not= i_1$. The $\coef(x_{i_1}x_{j_1},p)=1$ is due to normality. We have, for $y \not= i_1$
$$\coef(x_{i_1}x_y, p) =\coef(x_{i_1}x_y, b  q) = b_{i_1 i_1}b_{j_1 y} + b_{i_1 y}\underbrace{b_{j_1 i_1}}_{=0}= \begin{cases} 0 & \text{ if } y \not= j_1 \\ 1 & \text{ if } y = j_1\end{cases}.$$ So \begin{equation}\label{eq1} b_{j_1 y } = 0 \mbox{ for all }  y \not= j_1.\end{equation}

Consider now $x_yx_z$ with $y \not= z$, $y,z \not= i_1, j_1$ and $x_yx_z$ would be a monomial in $\bor x_{i_1}x_{j_1}$ but not in $\bor x_{i_2}x_{j_2}$. In other words
 $(y,z) \prec (i_1,j_1)$ but $(y,z) \not\prec (i_2,j_2)$. We have
$$ \coef(x_yx_z, p) = b_{i_1y}\underbrace{b_{j_1z}}_{=0} + b_{i_1z}\underbrace{b_{j_1z}}_{=0} = 0.$$  We therefore have
$$ \bor q  = \bor q_1 + \bor(q_2 + \cdots +q_r) 
= x_{i_1}x_{i_2}  + \mbox{ terms which are } \geq x_{i_2}x_{j_2}. $$
  So the next smallest mixed monomial in $p$ is the smallest mixed monomial of $q_2 + \cdots + q_r$, that is $x_{i_2}x_{ j_2}$. So $p_2 = q_2$.

{\bf Case $t > 1$:}  Our induction hypothesis is that, for $s < t$, $p_s = q_s$. The normality of $p$ forces $\coef(x_{i_s}x_y, p) = 0$ for all $y \not= j_s$ and $\coef(x_{j_s}x_y, p) = 0$ for all $y \not= i_s$. In addition, for $1 \leq s < t$ we assume that $b_{j_s y} = 0 $ holds for all $y \not= j_1, \ldots, j_s$. Then
\begin{align}\label{eq2} \coef(x_{i_t}x_y,p) & = b_{i_1 i_t}b_{j_1 y} + \cdots + b_{i_t i_t}b_{j_t y} = \begin{cases} b_{i_t i_t}b_{j_t y}  = 0 & \text{ for } y \not= j_1, \ldots, j_{t-1} \\
  b_{i_t i_t}b_{j_t j_t} = 1 & \text{ for } y = j_t \end{cases} \\ & \Rightarrow \ \ \ \ b_{j_t y} = 0 \mbox{ for all } y \not= j_1, \ldots, j_t.\end{align}

Now consider the mixed monomial $x_yx_z$ with $x_yx_z$ being a summand in $b \sum_{s < t}x_{i_{s}}x_{j_{s}}$ but not in $b x_{i_t}x_{j_t}$. In other words $(y,z) \prec (i_{s},j_{s})$ for $s<t$, but $(y,z) \not\prec (i_t,j_t)$. Additionally $y \not= z$, $y,z \not\in \{j_1,, \ldots, j_t\}$. Then 
$$\coef(x_yx_z,p) = \sum_{l\leq t}b_{i_l y}\underbrace{b_{j_l z}}_{=0} +  \sum_{l\leq t}b_{i_l z}\underbrace{b_{j_l y}}_{=0} = 0.$$
So the next smallest mixed monomial in $p$ is $x_{i_t}x_{j_t}$ and $p_t = q_t$. By induction it follows that $p$ and $q$ have the same mixed monomials.

For $q= q_1+ \cdots + q_r$ with $q_t = \epsilon_tx_{i_t}^2 + x_{i_t}x_{j_t}$ for each $t$, we have $p = p_1 + \cdots + p_r$ where $p_t = \alpha_tx_{i_t}^2 + x_{i_t}x_{j_t}$ with $\alpha_t \in \{0,1\}$ for all $1 \leq t \leq r$.
We now wish to show that $\epsilon_t = \alpha_t$ for all $t$. The proof is by induction on $t$.

{\bf Case $t=1$:} Since $$\bor q = \epsilon_1\bor x_{i_1}^2 + \bor x_{i_1}x_{j_1} + \mbox{higher terms} = \epsilon_1\underbrace{b_{i_1}^2}_{\not= 0}x_{i+1}^2 + \mbox{ higher terms }$$ then $x_{i_1}^2$ is a summand of $p$ if and only if $\epsilon_1 = 1$. So $\epsilon_1 = \alpha_1$.

{\bf Case $t > 1$:} We assume $\alpha_s = \epsilon_s$ for all $s < t$. Since $p = \bor q$, we have $$\alpha_t = \sum_{s\leq t} \epsilon_s b_{i_s i_t}^2 + \sum_{s < t}b_{i_s i_t}\underbrace{b_{j_s i_t}}_{=0} =  \sum_{s\leq t} \epsilon_s b_{i_s i_t}^2.$$ If there is a $j_k$ ($k < t$) with $j_k> j_t$ then the pairs $(i_k, j_k)$, $(i_t,j_t)$ satisfy $i_k < i_t < j_t < j_k$. Since $q$ is normal, by (C2) we have $\epsilon_k\epsilon_t = 0$. Likewise this condition holds for $p$ and we have
$$0 = \alpha_k\alpha_t = \underbrace{\epsilon_k}_{\mbox{by induction}}\left(\sum_{s\leq t} \epsilon_sb_{i_s i_t}^2\right) = \epsilon_k\sum_{s< t} \epsilon_sb_{i_s i_t}^2 + \underbrace{\epsilon_k\epsilon_t}_{=0}b_{i_t i_t}^2 = \epsilon_k\sum_{s< t} \epsilon_sb_{i_s i_t}^2.$$
If $\epsilon_k = 1$ then $\sum_{s< t} \epsilon_sb_{i_s i_t}^2 = 0$. Then $\alpha_t = \epsilon_tb_{i_t i_t}^2 = \epsilon_t$ (by normality) and we are done. Notice than, in order to conclude, it sufficed to find only one $k < t$ such that $j_k > j_t$ and $\epsilon_k = 1$.

If we are not able to use this argument, it means that for each $k<t$, either $j_k < j_t$ or $\epsilon_k = 0$. In other words, the only possibly non-zero summands in $\sum_{s \leq t} \epsilon_sb_{i_s i_t}^2$ are those for index $k<t$ with $j_k < j_t$. We show then that $b_{i_k i_t} = 0$. The proof is by induction on $t-k$.

{\bf Case $t-k=1$:}  We have \begin{align} 0 = \coef(x_{i_t}x_{j_{t-1}},p)& = \sum_{k \leq t}b_{i_ki_t}\underbrace{b_{j_k j_{t-1}}}_{=0\mbox{ for }k<t-1} \\ &= b_{i_{t-1}i_t}b_{j_{t-1}j_{t-1}} + b_{i_t i_t} b_{j_t j_{k-1}}. \end{align}
If $j_{t-1} < j_t$ then $b_{j_t j_{k-1}} = 0$ and therefore $b_{i_{t-1}i_t} = 0$.

{\bf Case $t-k >1$:}  Now assume that the claim holds for all $t-k < m$. We have
\begin{align} 0 = \coef(x_tx_{j_{t-m}}, p) & = \sum_{k \leq t} b_{i_k i_t} \underbrace{b_{j_k j_{t-m}}}_{ = 0 \mbox{ for } k<t-m} \\ & = b_{i_{t-m} i_t}b_{j_{t-m} j_{t-m}} + \sum_{t-m < k < t} b_{i_k i_t}b_{j_k j_{t-m}} + b_{i_t i_t}b_{j_t j_{t-m}} \end{align}

Let $j_{t-m} < j_t$. For each $t-m < k < t$ we have either $j_{t-m} < j_k$ or $j_k < j_t$. In the former case $b_{j_k j_{t-m}} = 0$ holds. In the latter case $b_{i_k i_t} = 0$ holds by induction. So each summand $b_{i_k i_t} b_{j_k j_{t-m}} = 0$. Therefore
  $$  0 = \coef(x_tx_{j_{t-m}},p) = b_{i_{t-m}i_t}b_{j_{t-m}j_{t-m}} + b_{i_t i_t}\underbrace{b_{j_t}b_{j_{t-m}}}_{=0}.$$
  So $b_{i_{t-m} i_t} = 0$. With that our claim holds. It follows that $\alpha_t = \epsilon_t$ for all $t$. We conclude that $p=q$. If we had indeed considered $\tilde{q}$ instead of $q$ then $p = \tilde{p_|}_{x_n=0} = \tilde{q_|}_{x_n=0} = q$. 

  {\bf Proof of 2.} From the above proof for uniqueness, since $p=q$ then $b \in B_q$. Claim (1) follows from (\ref{eq1}) and (\ref{eq2}). Claim (2) is exactly what is proved by induction on $t-k$.
\end{proof}

Now that we have characterized particular representatives of the Borel orbits of quadratic forms, we would like to determine their rank and those which are non-degenerate.
\begin{lem} A normal form $q$ is non-degenerate if and only if  $\ind(q) =\{1, \ldots, n\}$. \end{lem}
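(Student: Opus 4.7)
The plan is to analyze the radical of the polar bilinear form $b_q(v,w) := q(v+w) + q(v) + q(w)$ attached to $q$, taking ``non-degenerate'' in the standard characteristic $2$ sense, namely that the quadratic radical
\[
\operatorname{rad}(q) := \{\,v \in \KK^n \,:\, b_q(v,\cdot) \equiv 0 \text{ and } q(v) = 0\,\}
\]
is trivial. The key computation is that $b_q$ is alternating and that for a normal form $q = \sum_t (\epsilon_t x_{i_t}^2 + \delta_t x_{i_t} x_{j_t})$ the values $b_q(e_k, e_l)$ for $k \neq l$ vanish except that $b_q(e_{i_t}, e_{j_t}) = \delta_t$ on each mixed index pair. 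Consequently, $\operatorname{rad}(b_q)$ is spanned by $\{e_k : k \notin \ind(q)\} \cup \{e_{i_t} : \delta_t = 0\}$, and the whole argument reduces to understanding this radical.

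The forward direction is essentially free: if $\ind(q) \neq \{1,\ldots,n\}$, then for any $k \notin \ind(q)$ the basis vector $e_k$ lies in $\operatorname{rad}(b_q)$ and satisfies $q(e_k) = 0$, so $e_k$ is a nonzero element of $\operatorname{rad}(q)$ and $q$ is degenerate.

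For the converse, assume $\ind(q) = \{1,\ldots,n\}$. I would first invoke condition (C1) to show there is at most one isolated square $q_t = x_{i_t}^2$: if $q_s = x_{i_s}^2$ and $q_t = x_{i_t}^2$ with $s < t$, then $\epsilon_s = 1$ and $\delta_s = 0$ would force $\epsilon_t = 0$ via (C1), contradicting $q_t = x_{i_t}^2$. Together with the hypothesis on $\ind(q)$, this forces $\operatorname{rad}(b_q)$ to be either trivial (no isolated square) or the line $\KK e_{i_t}$ (a unique isolated square). In the first case $\operatorname{rad}(q) = 0$ immediately. In the second case, the disjointness of the sets $\{i_t, j_t\}$ in condition 2 of Theorem~\ref{lem-borbit} ensures that $x_{i_t}$ appears in no summand other than $q_t$, so $q(a e_{i_t}) = a^2$, which is nonzero for $a \neq 0$; hence $\operatorname{rad}(q) = 0$ and $q$ is non-degenerate.

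The whole proof should be very short, and the only non-routine point I expect is recognizing the precise role of (C1): it is exactly the structural condition that prevents the quadratic radical from growing beyond a single square summand, which is in turn exactly what is killed off by $q$ itself on the one-dimensional piece of $\operatorname{rad}(b_q)$ that remains when $\ind(q)$ is full. No nontrivial calculations are needed beyond the $b_q$-table above.
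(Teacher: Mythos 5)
Your proof is correct and takes essentially the same route as the paper: both arguments come down to computing the radical of the symmetric/polar bilinear form attached to the normal form (whose matrix has exactly one $1$ per mixed index pair) and then evaluating $q$ on that radical. The only differences are cosmetic improvements on your side: you use (C1) to treat even and odd $n$ uniformly where the paper splits by parity, and your forward direction is slightly more careful than the paper's in recording that $\be_k \in \operatorname{rad}(b_q)$ rather than just $q(\be_k)=0$.
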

\begin{proof} $(\Rightarrow )$ Let $\be_t = (0, \ldots, 0,1,0,\ldots,)$ with $t \not\in \ind(q)$. Then $q(\be_t) = 0$ although $\be_t \not=0$. So $q$ is degenerate.

  $(\Leftarrow)$  First consider the case when $n$ is even. Notice that $q$ can't contain a normal component which is a pure square $q_s = x_{i_s}^2$. If it did, this would be unique (due to normality), but then $\ind(q)$ would contain an odd number of elements, contradicting $\ind(q) =\{1, \ldots, n\}$. So $\ind(q)$ is a union of index pairs. Let ${ l}$ be the symmetric bilinear form associated to $q$. Let $M = (m_{i,j})$ be the matrix associated to ${ l}$. Then $ m_{i,j} = 1$ if $(i,j)$ or $(j,i)$ is an index pair for $q$. Otherwise $m_{i,j}=0$. Since  $\ind(q) =\{1, \ldots, n\}$ then every row and every column of $M$ has exactly one non-zero entry, which is $1$. So $M$ is invertible and $l$ is non-degenerate and, therefore,  $q$ is non-degenerate. If $n$ is odd, then there is a unique $t$ with $q_t = x_{i_t}^2$. In this case the $t^{\mbox{th}}$ row and column of $M$ are $0$-vectors. The $(n-1)\times (n-1)$ submatrix consisting of $M$ without the  $t^{\mbox{th}}$ row and column is non-degenerate. So $\KK  \be_t = \{ v \ \mid \ l(v,w) = 0 \mbox{ for all } w \in \KK^n\ \}$. However $q(\be_t)  = 1 \not= 0$ so $q$ is non-degenerate (see \cite{conrad2020bilinear}, Thm. 7.3).
  \end{proof}

  We now consider those quadratic forms $q$ for which $Bq$ is maximal in a certain way. Let $B_q$ be the stabilizer of $q$ under the action of $B$. Let $\pi: B \rightarrow T $ be the projection on the maximal torus $T$. So for $\bor \in B$ we have $\pi(b)_{i,j}= b_{i,i}$ for $i=j$ else $\pi(\bor)_{i,j}= 0$.
  \begin{defn} The {\bf $B$-rank } of a $B$-orbit $Bq$ is $n - \dim(\pi(B_q))$.

    \end{defn}

                                      \begin{lem}\label{lem-brank} For $q = q_1 + \cdots + q_r$ normal let  \begin{align} a_1&:= |\{q_t \ \mid \ q_t = x_{i_t}x_{j_t} \ \}|, \\  
                                        a_2 &:= |\{q_t \ \mid \ q_t = x_{i_t}^2+x_{i_t}x_{j_t} \ \}|, \\  
                                        a_3 & := |\{q_t \ \mid \ q_t = x_{i_t }^2 \ \}|. \end{align}
                                      For $B \subset GL(n,\KK)$ we have $ \rank(Bq) = a_1 + 2a_2 + a_3$.
                                   
                                    \end{lem}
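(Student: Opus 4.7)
My plan is to show $\pi(B_q)$ coincides with the pure torus stabilizer $T\cap B_q$, and then compute the dimension of the latter directly. Since $T\cap B_q\subseteq B_q$ and $\pi$ restricts to the identity on $T$, the inclusion $T\cap B_q\subseteq \pi(B_q)$ is automatic. For the reverse inclusion, I take an arbitrary $\bor=(b_{i,j})\in B_q$ and verify that $\pi(\bor)=\mathrm{diag}(b_{1,1},\dots,b_{n,n})$ already stabilises $q$. Because a diagonal matrix sends $x_i^2$ to $b_{i,i}^2 x_i^2$ and $x_ix_j$ to $b_{i,i}b_{j,j}x_ix_j$, this reduces to checking
\[
b_{i_t,i_t}^2=1 \text{ whenever } \epsilon_t=1,\qquad b_{i_t,i_t}b_{j_t,j_t}=1 \text{ whenever } \delta_t=1.
\]

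To get these identities, I would expand $\coef(x_{i_t}^2,\bor q)$ and $\coef(x_{i_t}x_{j_t},\bor q)$ as sums over the normal components $q_s$, and then use Part~2 of Theorem~\ref{lem-borbit} to kill every off-diagonal term. Concretely, for the square coefficient with $\epsilon_t=1$: contributions from $s>t$ vanish by upper-triangularity; contributions of the form $b_{i_s,i_t}^2$ from components with $\epsilon_s=1$, $s<t$, vanish by Part~2(2); and cross terms $b_{i_s,i_t}b_{j_s,i_t}$ coming from any component containing $x_{i_s}x_{j_s}$ vanish because $i_t\notin\{j_1,\dots,j_s\}$ by Part~2(1). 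Only the $s=t$ term survives, giving $b_{i_t,i_t}^2$. The mixed-monomial computation is analogous: the only potential contributors are those $q_s$ containing $x_{i_s}x_{j_s}$, and the four terms $b_{i_s,i_t}b_{j_s,j_t}$ and $b_{i_s,j_t}b_{j_s,i_t}$ are all killed for $s\neq t$ by Part~2(1) (using that $i_t,j_t\notin\{j_1,\dots,j_s\}$ when $t>s$, and using upper-triangularity when $s>t$), leaving precisely $b_{i_t,i_t}b_{j_t,j_t}$ from $s=t$.

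Having established $\pi(B_q)=T\cap B_q$, I would then read off the dimension directly from the list above. Each of the $a_3$ components of type $x_{i_t}^2$ imposes a single equation $t_{i_t}=1$ (one solution, since $\KK$ is quadratically closed of characteristic $2$). Each of the $a_2$ components of type $x_{i_t}^2+x_{i_t}x_{j_t}$ imposes $t_{i_t}=1$ and then $t_{j_t}=1$, i.e.\ two independent equations. Each of the $a_1$ components of type $x_{i_t}x_{j_t}$ imposes the single equation $t_{i_t}t_{j_t}=1$. Because the index sets $\{i_t,j_t\}$ are pairwise disjoint by normality, these constraints involve disjoint coordinates of $T$ and are therefore jointly independent; the $n-|\ind(q)|$ coordinates indexed outside $\ind(q)$ remain free. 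Summing gives
\[
\dim\pi(B_q)=\dim(T\cap B_q)=n-a_1-2a_2-a_3,
\]
hence $\rank(Bq)=n-\dim\pi(B_q)=a_1+2a_2+a_3$.

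The only genuine obstacle is the coefficient cancellation in the first step; every off-diagonal term has to be accounted for, and the vanishing relies entirely on Part~2 of the previous theorem together with the disjointness of the index pairs in a normal form. Once those cancellations are in hand, the dimension count is immediate from the block-diagonal structure of the constraints.
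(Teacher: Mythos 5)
Your proof is correct and takes essentially the same route as the paper: both expand $\coef(x_{i_t}^2,\bor q)$ and $\coef(x_{i_t}x_{j_t},\bor q)$, kill every off-diagonal term using Part 2 of Theorem~\ref{lem-borbit} together with upper-triangularity and the disjointness of index pairs, and then count the resulting diagonal constraints ($b_{i_t,i_t}=1$ when $\epsilon_t=1$, $b_{i_t,i_t}b_{j_t,j_t}=1$ when $\delta_t=1$) component by component to get codimension $a_1+2a_2+a_3$. Your explicit identification $\pi(B_q)=T\cap B_q$ merely makes rigorous the achievability direction that the paper leaves implicit in its ``degrees of freedom'' count, so it is a welcome tightening rather than a different argument.
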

                                    \begin{proof} Let $\bor \in B_q$. If $ q_t = x_{i_t }^2$ then, by claim 2. of Theorem~\ref{lem-borbit}, we have $$1 = \coef (x_{i_t}^2,\bor q) = \sum_{s \leq t}\underbrace{\epsilon_s b_{i_s i_t}}_{= 0 \mbox{ for } s<t}^2 + \sum_{s\leq t}\delta_s b_{i_s i_t}\underbrace{b_{j_s i_t}}_{=0} = b_{i_s i_s}^2$$
                                      so $b_{i_s i_s} = 1$. Similarly one obtains $1 = \coef(x_{i_t}x_{j_t},\bor q) = b_{i_t i_t}b_{j_t j_t}$. It follows that 
   $$ b_{i_t i_t} = b_{j_t j_t}^{-1}   = \begin{cases} 1 & \text{ if } \epsilon_t = 1 \\ \in \KK^* & \text{ if } \epsilon_t = 0 \end{cases}. $$
                                The dimension of $\pi(B_q)$ is the number of degrees of freedom of the diagonal coefficients, that is $ n- (a_1 + 2a_s + a_3)$. The claim follows.
                                    \end{proof}
                                   
                                    \begin{lem}\label{lem-orbcatalan}
                                 
                                      \begin{enumerate} \item Let $q$ be normal and of maximal $B$-rank $n$. Then $q = q_1 + \cdots + q_r + \epsilon q_{r+1}$ where
                                      \begin{enumerate}
                                      \item $r = \lfloor \frac{n}{2} \rfloor$
                                      \item $\epsilon = 1 $ if and only if $n$ is odd
                                      \item $q_t = x_{i_t}^2 + x_{i_t}x_{j_t}$ for every $1 \leq t \leq r$ and $q_{r+1} = x_{i_{r+1}}^2$
                                        \end{enumerate}
                                     \item The number of Borel orbits $Bq$ of maximal $B$-rank $n$ equals $C_{\lfloor \frac{n+1}{2}\rfloor}$
where $C_m$ denotes the $m$th Catalan number. \end{enumerate}
                                    \end{lem}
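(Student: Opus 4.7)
The plan is to combine Lemma~\ref{lem-brank} with the structural conditions (C1) and (C2) to pin down the shape of normal forms of maximal $B$-rank, and then to reduce their enumeration to a classical non-nested matching count.

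For part (1), Lemma~\ref{lem-brank} gives $a_1 + 2a_2 + a_3 = n$. On the other hand each component of type $x_{i_t}x_{j_t}$ or $x_{i_t}^2 + x_{i_t}x_{j_t}$ uses two elements of $\ind(q)$, while each pure square uses one, so $|\ind(q)| = 2a_1 + 2a_2 + a_3 \leq n$. Subtracting yields $a_1 \leq 0$, hence $a_1 = 0$ and $\ind(q) = \{1,\ldots,n\}$. Condition (C1) (see also Remark~\ref{rem-1}) says that any pure square component $x_{i_t}^2$ forces $\epsilon_s = 0$ for all later $s$; combined with $a_1 = 0$, this means at most one pure square can appear, and it must be the very last component. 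Hence $a_3 \in \{0,1\}$, and the parity of $n = 2a_2 + a_3$ forces $a_3 = 0$ and $a_2 = n/2$ when $n$ is even, and $a_3 = 1$ and $a_2 = (n-1)/2$ when $n$ is odd. This is exactly the claimed form with $r = \lfloor n/2 \rfloor$.

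For part (2), set $k = \lfloor (n+1)/2 \rfloor$. By Remark~\ref{rem-1}, when $n$ is odd I replace the trailing $x_{i_{r+1}}^2$ by a formal pair $(i_{r+1}, n+1)$, reducing to the even case on $\{1,\ldots,2k\}$. Normal forms of maximal $B$-rank are then in bijection with collections of $k$ pairs $(i_1 < j_1), \ldots, (i_k < j_k)$ whose supports partition $\{1,\ldots,2k\}$ (a perfect matching) and in which no two pairs are nested: since every $\epsilon_t = 1$, condition (C2) applied to an arrangement $i_s < i_t < j_t < j_s$ would force $\epsilon_s\epsilon_t = 0$, a contradiction. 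The enumeration is therefore reduced to counting non-nested perfect matchings on $2k$ points.

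To finish I would invoke the standard bijection between non-nested perfect matchings on $\{1,\ldots,2k\}$ and Dyck words of length $2k$: scanning left to right, write $U$ at each smaller element of a pair and $D$ at each larger element. The resulting word is Dyck, and conversely every Dyck word is realized by the unique matching in which each $D$ is paired with the \emph{oldest} unmatched $U$ (FIFO rule), which is automatic under non-nestedness: a nesting $a_1 < a_2 < b_2 < b_1$ is exactly the situation in which $b_2$ is paired with $a_2$ despite $a_1 < a_2$ being an older unmatched opener. Hence the number of such matchings is $C_k = C_{\lfloor(n+1)/2\rfloor}$. The main technical point to verify is this FIFO/non-nested equivalence, which is a routine but necessary check in both directions; everything else is bookkeeping flowing directly from Theorem~\ref{lem-borbit} and the preceding lemmas.
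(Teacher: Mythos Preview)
Your argument is correct and follows essentially the same route as the paper's: both deduce the shape of maximal-rank normal forms from Lemma~\ref{lem-brank} together with (C1)--(C2), then identify the resulting objects with non-nested perfect matchings on $2\lfloor (n+1)/2\rfloor$ points (using the $\tilde q$ extension for odd $n$). The only differences are expository: you spell out the inequality $2a_1+2a_2+a_3\le n$ to force $a_1=0$, and you give the FIFO/Dyck-word bijection explicitly where the paper simply cites \cite{stanley2015catalan} for the Catalan count.
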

                                    \begin{proof} Wir first consider the case when $n = 2r$ is even. Then by Lemma~\ref{lem-brank}, the  $B$-rank of $Bq$ is maximized when when $a_1 = 0$, $a_2 = r$, and $a_3=0$. This forces $q = q_1 + \cdots + q_r$ where $q_t = x_{i_t}^2 + x_{i_t}x_{j_t}$ for each $t$. Since $q$ is normal, it has no nested index pairs $(i_k, j_k)$, $(i_l,j_l)$ with $i_k < i_l < j_l < j_k$. The quadratic form $q$ is normal and of maximal  $B$-rank if and only if its diagram represents a non-nesting matching of the numbers $\{1, \ldots, n\}$. The number of such matchings is $C_r$, see
                                      \cite[p.29]{stanley2015catalan}. 

                                      In the case that $n = 2r+1$ is odd, $Bq$ has maximal  $B$-rank $n = 2r+1$  when $q$ is of the form $q = q_1 + \cdots + q_r + q_{r+1}$ where $q_t = x_{i_t}^2 + x_{i_t}x_{j_t}$ for each $t \leq r$ and $q_{r+1} = x_{i_{r+1}}^2$. The number of such $q$ is equal to the number of extended quadratic forms $\tilde{q}$ which, using the same argument as above, is $C_{r+1}$. 
                                    \end{proof}

                              \begin{defn}        Let $q =q_1 + \cdots + q_r + \epsilon x_{i_{r+1}}^2$ be a \emph{non-degenerate} quadratic form of maximal $B$-rank $n$. Let us first consider the case for $n = 2r$ even, that is $\epsilon = 0$. We say that $q':= q_s + q_{s+1} \cdots + q_{s+t}$ is a {\bf connected component} of $q$ if $t \geq 0$ is minimal with the property that for every $i \in \ind(q')$ follows $$ \max(\ind(q_1 + \cdots +q_{s-1})) < i < \min(\ind(q_{s+t+1}+ \cdots + q_r)).$$
                                If $q$ has only one connected component, then it is {\bf connected}.  Otherwise it is {\bf disconnected}. In the case that $n = 2r+1$ is odd, we say that $q$ is connected if and only if $\tilde{q}$ is connected (as defined for $n$ even). We denote the number of connected components in $q$ by $\ccomp(q)$.\end{defn}
                              \begin{rem}  The connectedness of a quadratic form $q$ is easily seen by its diagram. If it has no isolated point, then the diagram is connected if every vertical line crossing the diagram touches an edge. The quadratic form $q$ with diagram \begin{tikzpicture}[scale=0.8]
  \tikzset{vertex/.style={draw,circle,inner sep=1pt, fill=black}}
   \tikzset{overtex/.style={draw,circle,inner sep=1pt}}
    
    \node[vertex] (1) at (0,0) { };
    \node[vertex] (2) at (1,0) { };
    \node[overtex] (3) at (2,0) { };

    \draw (1) to [out=30,in=150] (3);
   
  \end{tikzpicture} is connected because the diagram for $\tilde{q}$ is connected:

  \begin{center} \begin{tikzpicture}[scale=0.8]
  \tikzset{vertex/.style={draw,circle,inner sep=1pt, fill=black}}
   \tikzset{overtex/.style={draw,circle,inner sep=1pt}}
    
    \node[vertex] (1) at (0,0) { };
    \node[vertex] (2) at (1,0) { };
    \node[overtex] (3) at (2,0) { };
    \node[overtex] (4) at (3,0) { };

    \draw (1) to [out=30,in=150] (3);
    \draw (2) to [out=30,in=150] (4);
\end{tikzpicture} \end{center}

 Connected components of the diagram are (maximally) connected subdiagrams. If it has an isolated point, we consider the diagram for the extended $\tilde{q}$. So the diagram \ref{diag1} has two connected components because the diagram below for its extension $\tilde{q}$ has two components:
  
  \begin{equation}
  \begin{tikzpicture}[scale=0.8]
  \tikzset{vertex/.style={draw,circle,inner sep=1pt, fill=black}}
   \tikzset{overtex/.style={draw,circle,inner sep=1pt}}
    
    \node[vertex,label=below:{\tiny $1$}] (1) at (0,0) { };
    \node[overtex,label=below:{\tiny $2$}] (2) at (1,0) { };
    \node[overtex,label=below:{\tiny $3$}] (3) at (2,0) { };
    \node[vertex,label=below:{\tiny $4$}] (4) at (3,0) { };
     \node[overtex,label=below:{\tiny $5$}] (5) at (4,0) { };
    \node[overtex,label=below:{\tiny $6$}] (6) at (5,0) { };
    \node[vertex,label=below:{\tiny $7$}] (7) at (6,0) { };
    \node[overtex,label=below:{\tiny $8$}] (8) at (7,0) { };
    \node[overtex,label=below:{\tiny $9$}] (9) at (8,0) { };
     \node[overtex,label=below:{\tiny $10$}] (10) at (9,0) { };

    \draw (1) to [out=25,in=155] (5);
    \draw (2) to [out=30,in=150] (3);
     \draw (4) to [out=30,in=150] (6);
     \draw (8) to [out=30,in=150] (9);
       \draw (7) to [out=30,in=150] (10);
  \end{tikzpicture}
\end{equation}
                                       \end{rem}
                                     \begin{defn}
                                        
                                    We denote by $ \bc(n,f)$ the number of non-degenerate, maximal $B$-rank $n$ quadratic forms with $f$ connected components. \end{defn}  \begin{rem}\label{rem-orbcatalan} For $n=2r$ we have $\bc(2r,f) = 0$ for $f > r$ or $f<0$ and $\bc(2r,r) = 1$ (the case where the corresponding diagram is \ \ 
 \begin{tikzpicture}[scale=0.5]
\filldraw
(0,0) circle (2pt)
(2,0) circle (2pt)
(6,0) circle (2pt);
\draw
(1,0) circle (2pt)
(3,0) circle (2pt)
(7,0) circle (2pt);
\node at (4,0) {\ldots};
\node at (5,0) {\ldots};
\uarc01 \uarc23 \uarc67
\end{tikzpicture} ). From Lemma~\ref{lem-orbcatalan} we know that $\sum_{f=1}^r\bc(2r,f) = C_{r}$, the Catalan number.

To count $\bc(2r,f)$ we consider the diagrams associated to quadratic forms. To such a diagram we can associate a (horizontal) Dyck path or ``mountain range''\cite{BERNHART1973} in which at each $i_t$ there is an upward stroke and at each $j_t$ there is a downward stroke.
                                         That $i_1=1$ and $j_r = n$ means that the range moves up from the ``ground'' or $0$-level and that it ends at the ground. The condition $t < j_t $ implies that the mountain never goes below the ground.
                                         If the diagram has $f$ connected components, then the first component is over the first, say, $2l$ dots. Such a component corresponds to a mountain range

\begin{center}
  \begin{tikzpicture}
     \node (1) at (1,0) {$1$};
    \node (2) at (8,0) {$2l$};
  \draw[black,thick] (0,0) -- (9,0);
  \draw[gray, thin] (0,0.5) -- (9,0.5);
  \draw[gray, dashed] (1,0) -- (2,1);
  \draw[gray, dashed] (2,1) -- (2.5,0.5);
   \draw[gray, dashed] (2.5,0.5) -- (4,2);
   \draw[gray,dashed] (4,2) -- (5,1);
   \draw[gray,dashed] (5,1) -- (5.5,1.5);
    \draw[gray,dashed] (5.5,1.5) -- (6.5, 0.5);
    \draw[gray,dashed] (6.5,0.5) -- (7,1);
    \draw[gray,dashed]  (7,1) -- (8,0);
   
  \end{tikzpicture}
\end{center}

\noindent
which, except for at vertices $1$ and $2l$, never touches the ``ground'' or the $0$-line. The above mountain range (with $2l =14$) corresponds to the sequence and diagram

\begin{center}
   \begin{tikzpicture}[scale=0.5]
  \tikzset{vertex/.style={draw,circle,inner sep=1pt, fill=black}}
   \tikzset{overtex/.style={draw,circle,inner sep=1pt}}
    
    \node[vertex,label=below:{\tiny $i_1$}] (1) at (0,0) { };
    \node[vertex,label=below:{\tiny $i_2$}] (2) at (1,0) { };
    \node[overtex,label=below:{\tiny $j_1$}] (3) at (2,0) { };
    \node[vertex,label=below:{\tiny $i_3$}] (4) at (3,0) { };
     \node[vertex,label=below:{\tiny $i_4$}] (5) at (4,0) { };
    \node[vertex,label=below:{\tiny $i_5$}] (6) at (5,0) { };
    \node[overtex,label=below:{\tiny $j_2$}] (7) at (6,0) { };
    \node[overtex,label=below:{\tiny $j_3$}] (8) at (7,0) { };
     \node[vertex,label=below:{\tiny $i_6$}] (9) at (8,0) { };
    \node[overtex,label=below:{\tiny $j_4$}] (10) at (9,0) { };
    \node[overtex,label=below:{\tiny $j_5$}] (11) at (10,0) { };
    \node[vertex,label=below:{\tiny $i_7$}] (12) at (11,0) { };
     \node[overtex,label=below:{\tiny $j_6$}] (13) at (12,0) { };
    \node[overtex,label=below:{\tiny $j_7$}] (14) at (13,0) { };
   
    \draw (1) to [out=30,in=150] (3);
    \draw (2) to [out=30,in=150] (7);
      \draw (4) to [out=30,in=150] (8);
      \draw (5) to [out=30,in=150] (10);
        \draw (6) to [out=30,in=150] (11);
        \draw (12) to [out=30,in=150] (14);
          \draw (9) to [out=30,in=150] (13);
  
  \end{tikzpicture}
  \end{center} 
  There are $C_{l-1}$ different ways of creating a mountain range over the $2(l-2)$ vertices $2, \ldots, 2l-1$. There are $\bc(2r-2l,f-1)$ ways of creating $f-1$ mountain ranges over the $2r-2l$ remaining vertices. Since $l$ can range from $1$ to $r-f+1$ we have the recursive relation
$$\bc(2r,f) = \sum_{l=1}^{r-f+1}C_{l-1}\cdot \bc(2r-2l,f-1).$$
In particular, $\bc(2r, 1) = C_{r-1}$.

In the case that $n = 2r+1$ is odd, then every non-degenerate quadratic form $q = \sum_{l=1}^r(x_{i_l}^2 + x_{i_l}x_{j_l}) + x_{i_{r+1}}^2$ of maximal $B$-rank $2r$ is normal if and only if its extension $\tilde{q} =\sum_{l=1}^r(x_{i_l}^2 + x_{i_l}x_{j_l}) + x_{i_{r+1}}^2 + x_{i_{r+1}}x_{n+1}$ is normal. From the discussion in the previous paragraph, we have then that
$$\bc(2r+1,f) = \bc(2r+2,f) =  \sum_{l=1}^{r-f+2}C_{l-1}\cdot \bc(2r-2l+2,f-1).$$
\end{rem}
The {\it Catalan triangle numbers}, a generalization of the Catalan numbers, were first introduced by Shapiro in \cite{shapiro1976catalan}. Just like binomial coefficients, they can be defined recursively \cite{lee2016catalan}.
\begin{defn} The {\bf $(n,k)$-Catalan triangle number $C(n,k)$} is defined by $C(n,k) = 0 $ for $k>n$ or $n<0$ and for $n\geq 0$ and $0 \leq k \leq n$
  $$C(n,k) = \begin{cases} 1 & \text{ for } n=k=0; \\ C(n,k-1)+C(n-1,k) & \text{ for } 0 < k < n; \\ C(n-1,0) & \text{ for } k=0; \\ C(n,n-1) & \text{ for } k=n, \end{cases}.$$
  
In particular, for all $n\geq 0$, we have $C(n,n) = C_n$, the $n^{\mbox{th}}$ Catalan number.
  \end{defn} 
 \begin{lem} The number $\bc(2r,f)$  of non-degenerate quadratic forms of maximal $B$-rank $2r$ with $f$ connected components equals the Catalan triangle number $C(r-1,r-f)$.

                                      \end{lem}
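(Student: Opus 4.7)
My plan is to show that both $\bc(2r,f)$ and $C(r-1,r-f)$ satisfy a common Pascal-type two-term recurrence with matching boundary values, and then conclude by induction on $r$.

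At the extreme values of $f$ the identification is immediate. For $f=r$, the only possible diagram consists of $r$ disjoint adjacent edges (corresponding to $q=\sum_{t=1}^{r}(x_{2t-1}^{2}+x_{2t-1}x_{2t})$), so $\bc(2r,r)=1=C(r-1,0)$. For $f=1$, Remark~\ref{rem-orbcatalan} yields $\bc(2r,1)=C_{r-1}$, while iterating $C(n,n)=C(n,n-1)$ starting from $n=r-1$ shows $C(r-1,r-1)=C_{r-1}$.

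The main step is to establish the interior recurrence
\[\bc(2r,f)=\bc(2r,f+1)+\bc(2(r-1),f-1)\qquad (1<f<r),\]
which is exactly the Catalan-triangle recursion $C(n,k)=C(n,k-1)+C(n-1,k)$ at $(n,k)=(r-1,r-f)$ under the target identification (note $(2r,f+1)\leftrightarrow (r-1,r-f-1)$ and $(2(r-1),f-1)\leftrightarrow (r-2,r-f)$). To prove it, I would apply the sum recurrence of Remark~\ref{rem-orbcatalan} twice: write $\bc(2r,f+1)=\sum_{l\geq 1}C_{l-1}\bc(2(r-l),f)$, expand the inner factor by the same rule, and re-index the resulting double sum by $m=l+l'$. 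The coefficient of $\bc(2(r-m),f-1)$ becomes $\sum_{l=1}^{m-1}C_{l-1}C_{m-l-1}=C_{m-1}$ by the Catalan convolution, so the double sum collapses to $\bc(2r,f+1)=\sum_{m\geq 2}C_{m-1}\bc(2(r-m),f-1)$. Subtracting from the sum recurrence for $\bc(2r,f)$ leaves precisely the missing $m=1$ term, namely $C_{0}\,\bc(2(r-1),f-1)=\bc(2(r-1),f-1)$.

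With boundaries and recurrence matched, a straightforward induction on $r$ (treating $f\in\{1,r\}$ by the boundary analysis and $1<f<r$ by the recurrence, invoking the outer induction on $\bc(2(r-1),f-1)$ and an inner downward induction on $f$ for $\bc(2r,f+1)$) yields $\bc(2r,f)=C(r-1,r-f)$ for all $1\leq f\leq r$. The main obstacle is the algebraic collapse in the derivation of the interior recurrence; once the Catalan convolution is recognised inside the doubly-expanded sum, the rest is routine bookkeeping. A bijective alternative is also available: separate Dyck-path diagrams with $f$ arches into those whose first arch has length $2$ (contributing $\bc(2(r-1),f-1)$ via removal) and those with first arch of length $>2$ (in bijection with $(f+1)$-arch diagrams of length $2r$ by detaching the innermost leading arch and re-attaching the outer $UD$ around the next block).
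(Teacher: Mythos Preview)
Your argument is correct and follows the same overall strategy as the paper: verify that $\bc(2r,f)$ satisfies the boundary values and the Pascal-type recurrence that characterise the Catalan triangle numbers. The one genuine difference is in how you establish the interior recurrence $\bc(2r,f)=\bc(2r,f+1)+\bc(2(r-1),f-1)$. The paper proves it by induction on $n+k$, using the already-established Pascal identity at smaller indices inside the sum from Remark~\ref{rem-orbcatalan}; you instead expand the sum recurrence \emph{twice} and collapse the resulting double sum via the Catalan convolution $\sum_{l=1}^{m-1}C_{l-1}C_{m-l-1}=C_{m-1}$, which gives the identity outright with no self-referential induction. Your route is slightly cleaner and more transparent, and it also lets you get away with checking only the two extreme boundaries $f=1$ and $f=r$ rather than the four conditions the paper verifies. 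The bijective sketch at the end is a nice bonus but is not needed for the proof.
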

                                      \begin{proof}
                                        We show that $C(n,k) = \bc(2(n+1),n-k+1)$ satisfies the necessary defining conditions of a Catalan triangle number \cite{shapiro1976catalan}.

                                        \begin{enumerate} \item {\bf $C(n,0) = 1$ for all $n\geq 0$:} From the previous remark we know that $\bc(2n+2,n+1) = 1 $ for all $n \geq 0$ so $\bc(2n+2,n+1) = C(n,0)$. 
                                        \item {\bf $C(n,1) = n $ for $n \geq 1$:} The proof is by induction on $n$. We clearly have $\bc(4,2) = 1 = C(1,1)$. Since
                                          \begin{align} \bc(2n+2,n) & = \sum_{l=1}^2C_{l-1}\cdot \bc(2(n+1-l),n-1) \\ &= C_0\cdot \bc(2n,n-1) + C_1\cdot \bc(2n-2,n-1) \\ & = 1 \cdot \underbrace{(n-1)}_{\mbox{ (by induction)}} + 1 \cdot 1 = n \end{align}
                                          we conclude $\bc(2n+2,n) = C(n,1)$ for all $n$.
                                        \item {\bf $C_{n+1} = C(n+1,n+1) = C(n+1,n)$ for $n\geq 0$:} We have $\bc(2(n+2), 1) = C_{n+1}$ from Remark~\ref{rem-orbcatalan} for all $n \geq 0$. In addition, \begin{eqnarray} \bc(2(n+2), 2) = \sum_{l=1}^{n+1}C_{l-1}\cdot \bc(2(n+2-l),1) = \sum_{l=1}^{n+1}C_{l-1}\cdot C_{n+1-l} = C_{n+1} \end{eqnarray}
                                          It follows that $\bc(2n+2,1) = C(n+1,n+1) = C(n+1,n) = \bc(2n+2,2)$ for all $n$.
                                        \item {\bf $C(n+1,k) = C(n+1,k-1) +C(n,k)$ for $1 < k < n+1$: } The proof is by induction on $n +k $. For $n+k=3$ we have
                                          $ \bc(8,2) = C_3 = 5 = C(3,2)$. In addition $$ \bc(8,3) = \sum_{l=1}^2C_{l-1}\bc(2(4-l),2) = C_0\bc(6,2) + C_1\bc(4,2) = C_0C_2 + C_1^2 = 3 = C(3,1),$$ and $\bc(6,1) = C_2 =2=  C(2,2)$. So the equation is satisfied in this case. In general,
                                          \begin{align} \bc(2(n+2),&n-k+2)  - \bc(2n+2,n-k+1) \\ &  = \sum_{l=1}^{k+1}C_{l-1}\left( \bc(2(n+2-l),n-k+1) - \bc(2(n+1-l),n-k)\right) \\
                                            \mbox{ (by induction) } \hskip .5cm  & =  \sum_{l=1}^{k+1}C_{l-1}\cdot \bc(2(n+2-l),n-k+2)  = \bc(2(n+2),n-k+3)\end{align}
                                                                                          and since (by induction) $\bc(2(n+2),n-k+3) = C(n+1,k-1)$ and $\bc(2n+2,n-k+1) = C(n,k)$ then $ \bc(2(n+2),n-k+2) = C(n+1,k)$. 

                                                                                        \end{enumerate}
                                      \end{proof}
                                      \begin{rem} With that we obtain a representation theoretic proof of a generalization of the recursive Catalan number identity $$C_{n+1} = \sum_{l=1}^nC_lC_{n-l}, \ \ \ C_0=1$$ to the Catalan triangle numbers:

                                        \end{rem}
                                      \begin{cor}(\cite{hilton1991catalan},\cite{vera2014generalization}) The Catalan triangle numbers satisfy the recursive relation
                                        $$C(n,k) = \sum_{l=0}^kC(l,l)C(n-l-1,k-l)$$ for $k<n$.

                                        \end{cor}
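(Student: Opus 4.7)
My plan is to obtain this identity by translating the combinatorial recursion for $\bc(2r,f)$ from Remark~\ref{rem-orbcatalan} into the Catalan triangle notation via the identification proved in the preceding lemma. Since that recursion was established by splitting a non-degenerate normal quadratic form of maximal $B$-rank according to the size of its first connected component, this will give the corollary essentially for free, with no further combinatorial argument beyond a reindexing.

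The key inputs are: (i) the lemma just proved, which gives $\bc(2r,r-k) = C(r-1,k)$, equivalently $\bc(2(n+1),n+1-k) = C(n,k)$; (ii) the special case $\bc(2(m+1),1) = C_m$, combined with the identity $C(m,m) = C_m$ from the definition; and (iii) the recursion
\[ \bc(2r,f) \;=\; \sum_{l=1}^{r-f+1} C_{l-1}\cdot \bc(2r-2l,\,f-1) \]
derived in Remark~\ref{rem-orbcatalan} by peeling off the first mountain range.

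Concretely, I would first set $r = n+1$ and $f = n+1-k$ in the recursion, so that its left-hand side becomes $\bc(2(n+1), n+1-k) = C(n,k)$ and the summation range becomes $1 \leq l \leq k+1$. Next I would rewrite each term on the right: the factor $\bc(2(n+1-l), n-k)$ matches the pattern $\bc(2(m+1), m+1-j)$ with $m = n-l$ and $j = k-l+1$, so by the lemma it equals $C(n-l, k-l+1)$. This yields
\[ C(n,k) \;=\; \sum_{l=1}^{k+1} C_{l-1}\cdot C(n-l,\,k-l+1). \]
Finally I would reindex by $l \mapsto l+1$, obtaining
\[ C(n,k) \;=\; \sum_{l=0}^{k} C_{l}\cdot C(n-l-1,\,k-l) \;=\; \sum_{l=0}^{k} C(l,l)\cdot C(n-l-1,\,k-l), \]
which is the desired identity.

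The only minor subtlety, and the one thing I would double-check carefully, is the bookkeeping: verifying that the boundary cases $l=0$ and $l=k$ of the resulting sum correspond to the extreme values $l=1$ and $l=k+1$ of the original recursion, and that the hypothesis $k<n$ keeps all arguments of $C(\cdot,\cdot)$ in the legal range (so that the boundary conventions of the Catalan triangle, rather than the $0$-convention for out-of-range values, apply). There is no nontrivial obstacle beyond this reindexing check, since all of the combinatorial content was already packaged in the preceding remark and lemma.
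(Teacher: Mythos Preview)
Your proposal is correct and is precisely the argument the paper intends: the corollary is stated without its own proof, immediately after the remark that the preceding lemma and the recursion from Remark~\ref{rem-orbcatalan} together yield a representation-theoretic proof of this identity. Your substitution $r=n+1$, $f=n+1-k$ and reindexing $l\mapsto l+1$ carry this out exactly, and your boundary check (that $k<n$ keeps $C(n-1,k)$ and $C(n-k-1,0)$ in range) is the only thing left to verify.
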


                                        \section{Parabolic orbits}
                                        
                                        Let $P:=P_i$ be the minimal parabolic subgroup $\langle B, s_{i} \rangle$ of $G$ where $s_i$ is the simple reflection $(i\ i+1)$. In this section we will investigate how the $P$-orbit $Pq$ of a normal quadratic form $q$ decomposes as a union of $B$-orbits. This information can then be encoded in the so-called Brion graph \cite{brion2001orbit}.

                                        \begin{rem} Let $q$ be a normal quadratic form. 
                                        The action of $s_i$ on $q$ is determined by its action on those normal components of $q$ which contain $i$ and $i+1$; it switches $x_i$ and $x_{i+1}$ and fixes all other $x_j$. The resulting quadratic form $s_iq$ can be also normal (but need not be) and it could have a different rank from $q$. 
                                        \end{rem}
\begin{lem}\label{lem-porbit}
 Let $q = q'+ p$ be a normal quadratic form where $p$ is the sum of all normal components which do not contain $i, i+1$. The $P$-orbit $Pq$ decomposes as a union of $B$-orbits in the following way:
  \begin{enumerate}[leftmargin=*]
  \item if $q'= 0$ (i.e. $i,i+1 \not\in \ind(q)$) then $P q = B q$,
  \item else if $q = \epsilon x_{i}^2 + x_{i}x_{i+1} + p$, $\epsilon \in \{0,1\}$ then  $P q = B (x_{i}^2 + x_{i}x_{i+1} + p) \cup B( x_{i}x_{i+1} +p)$
    \item else if  $q \in M_1:=\{ \epsilon_j x_i^2 + x_ix_j +\epsilon_{k}x_{i+1}^2+x_{i+1}x_k+p \ | \ \epsilon_{\max(j,k)}=1\} $ for some $j,k \not=i,i+1$ then  $P q= \cup_{g \in M_1}B g $, \\
    OR  $q \in M_2:=\{ \epsilon_jx_j^2 + x_jx_i + \epsilon_i x_{k}^2+x_{k}x_{i+1} +p \ | \ \epsilon_{\min(j,k)}=1 \}$   for some $j,k \not=i,i+1$  then  $P q= \cup_{g \in M_2}B g$, 
  \item otherwise $s_iq = q'$ is normal and  $P q = B q \cup Bq'  $.

    \end{enumerate} 
  \end{lem}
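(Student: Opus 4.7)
The plan is to use the Bruhat decomposition $P_i = B \sqcup Bs_iB$, giving
\[ P_i q = Bq \cup Bs_iBq = Bq \cup \bigcup_{b \in B} B(s_i b q). \]
Since $Bq$ is a single orbit (by Theorem~\ref{lem-borbit}), the additional $B$-orbits in $Pq$ are determined by the distinct normal forms among the elements $s_i b q$ as $b$ ranges over $B$. The action of $s_i$ merely swaps $x_i$ and $x_{i+1}$ and leaves every other $x_j$ fixed, so its effect on $q$ is confined to the normal components meeting $\{i,i+1\}$. For each case of the lemma I would compute $s_i q$ directly, then apply the normalization algorithm from the existence proof of Theorem~\ref{lem-borbit} to bring it to normal form; when $s_iq$ is already $B$-equivalent to $q$, I would further check whether some choice of $b \in B$ mixing $x_i$ or $x_{i+1}$ into another variable yields a genuinely new orbit after applying $s_i$.

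Case (1) is immediate: with $i, i+1 \not\in \ind(q)$ we have $s_iq=q$, and any Borel introduction of $x_i$ or $x_{i+1}$ (from some $x_l$ with $l<i$) into $bq$ can, after swapping by $s_i$, be rewritten as another Borel operation with the roles of $x_i, x_{i+1}$ interchanged, so $s_ibq \in Bq$ and $Pq=Bq$. Case (2) with $q = \epsilon x_i^2 + x_ix_{i+1} + p$ is handled by direct computation: $s_iq = \epsilon x_{i+1}^2 + x_ix_{i+1} + p$, and the Borel operation $x_i \mapsto x_i+x_{i+1}$ reduces this to $x_ix_{i+1}+p$ when $\epsilon=1$, while when $\epsilon=0$ the same operation applied \emph{before} $s_i$ yields $bq = x_ix_{i+1}+x_{i+1}^2+p$, hence $s_ibq = x_i^2+x_ix_{i+1}+p$ in normal form; both subcases thus produce exactly the two listed orbits. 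For case (4), $s_iq=q'$ is already normal and distinct from $q$; arguing as in case (1), every other $s_ibq$ normalizes back to one of $q$ or $q'$, yielding $Pq = Bq \cup Bq'$.

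Case (3) is the delicate one. Here $q$ has two separate normal components adjacent to the indices $\{i,i+1\}$, namely pairs $(i,j)$ and $(i+1,k)$ (for $M_1$) or $(j,i)$ and $(k,i+1)$ (for $M_2$). Applying $s_i$ swaps $i \leftrightarrow i+1$ inside these two components, and the resulting pairs become nested or crossing depending on the relative order of $j$ and $k$; after re-sorting and applying a Borel operation mixing $x_{\max(j,k)}$ and $x_{\min(j,k)}$ to respect condition (C2), the square attached to the ``outer'' pair can migrate to the ``inner'' one and vice versa. The plan is to exhibit, for each element of $M_1$ (resp.\ $M_2$), an explicit $b \in B$ such that $s_ibq$ normalizes to that element, and then show conversely that every $s_ibq$ normalizes into $M_1$ (resp.\ $M_2$). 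The main obstacle lies exactly in this bookkeeping: one must track precisely how condition (C2) interacts with the $s_i$-swap when the two pairs share the adjacent indices $i,i+1$, and verify that the constraint $\epsilon_{\max(j,k)}=1$ (resp.\ $\epsilon_{\min(j,k)}=1$) is the exact $P_i$-invariant that prevents spurious normal forms from appearing outside $M_1$ or $M_2$.
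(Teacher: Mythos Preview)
Your approach is essentially the paper's: in each case compute $s_iq$ and reduce to normal form via the algorithm of Theorem~\ref{lem-borbit}. The paper's own proof is even terser than your outline---it simply records the normal form of $s_iq$ for each listed $q$ (and in case~(3) notes that $s_i$ additionally swaps the two lower-rank normal forms), without your Bruhat-decomposition framing or any explicit check over general $b\in B$.
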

  \begin{proof} \begin{enumerate} \item Follows from $s_iq = q$.

    \item So the component in $q$ containing $i,i+1$ has as diagram
      \raisebox{-.25\height}{\begin{tikzpicture}[scale=0.8]
  \tikzset{vertex/.style={draw,circle,inner sep=1pt, fill=black}}
   \tikzset{overtex/.style={draw,circle,inner sep=1pt}}
    
    \node[vertex,label=below:{\tiny $i$}] (1) at (0,0) { };
    \node[overtex,label=below:{\tiny $i+1$}] (2) at (1,0) { };

    \draw (1) to [out=30,in=150] (2);
  
  \end{tikzpicture}} or
 \raisebox{-.25\height}{\begin{tikzpicture}[scale=0.8]
  \tikzset{vertex/.style={draw,circle,inner sep=1pt, fill=black}}
   \tikzset{overtex/.style={draw,circle,inner sep=1pt}}
    
    \node[overtex,label=below:{\tiny $i$}] (1) at (0,0) { };
    \node[overtex,label=below:{\tiny $i+1$}] (2) at (1,0) { };

    \draw (1) to [out=30,in=150] (2);
   
\end{tikzpicture}}. We have  $s_{i}(x_{i}^2+ x_{i}x_{i+1}) = x_{i+1}^2 + x_{i}x_{i+1}$, which is not normal but whose normal form is $x_ix_{i+1}$. The claim follows.

\item  In the first case the components in $q$ containing $i,i+1$ have as diagram

  \begin{center}  \raisebox{-.25\height}{\begin{tikzpicture}[scale=0.8]
  \tikzset{vertex/.style={draw,circle,inner sep=1pt, fill=black}}
   \tikzset{overtex/.style={draw,circle,inner sep=1pt}}
    
    \node[vertex,label=below:{\tiny $i$}] (1) at (0,0) { };
    \node[vertex,label=below:{\tiny $i+1$}] (2) at (1,0) { };
    \node[overtex,label=below:{\tiny $j$}] (3) at (2,0) { };
    \node[overtex,label=below:{\tiny $k$}] (4) at (3,0) { };

    \draw (1) to [out=30,in=150] (3);
    \draw (2) to [out=30,in=150] (4);
\end{tikzpicture}} \hskip .5cm or \hskip .5cm \raisebox{-.25\height}{\begin{tikzpicture}[scale=0.8]
  \tikzset{vertex/.style={draw,circle,inner sep=1pt, fill=black}}
   \tikzset{overtex/.style={draw,circle,inner sep=1pt}}
    
     \node[vertex,label=below:{\tiny $i$}] (1) at (0,0) { };
    \node[overtex,label=below:{\tiny $i+1$}] (2) at (1,0) { };
    \node[overtex,label=below:{\tiny $j$}] (3) at (2,0) { };
    \node[overtex,label=below:{\tiny $k$}] (4) at (3,0) { };

    \draw (1) to [out=30,in=150] (4);
    \draw (2) to [out=30,in=150] (3);
\end{tikzpicture}} \hskip .5cm or \hskip .5cm \raisebox{-.25\height}{\begin{tikzpicture}[scale=0.8]
  \tikzset{vertex/.style={draw,circle,inner sep=1pt, fill=black}}
   \tikzset{overtex/.style={draw,circle,inner sep=1pt}}
    
    \node[overtex,label=below:{\tiny $i$}] (1) at (0,0) { };
    \node[vertex,label=below:{\tiny $i+1$}] (2) at (1,0) { };
    \node[overtex,label=below:{\tiny $j$}] (3) at (2,0) { };
    \node[overtex,label=below:{\tiny $k$}] (4) at (3,0) { };

    \draw (1) to [out=30,in=150] (4);
    \draw (2) to [out=30,in=150] (3);
\end{tikzpicture}} \end{center} 
We have $s_i(x_i^2 + x_ix_j +x_{i+1}^2+x_{i+1}x_k) = x_{i+1}^2 + x_{i+1}x_j + x_i^2 + x_ix_k$, which is not normal because $(i,k)$ and $(i+1,j)$ are nested. The normal form of $ x_{i+1}^2 + x_{i+1}x_j + x_i^2 + x_ix_k$ is $ x_i^2+x_ix_k+x_{i+1}x_j$. In addition, the idempotent $s_i$ switches $x_i^2+x_ix_k+x_{i+1}x_j$ and $x_{i+1}^2+x_{i+1}x_k+x_ix_j$, both normal but not of maximal rank.

In the second case  the components in $q$ containing $i,i+1$ have as diagram

  \begin{center} \raisebox{-.25\height}{\begin{tikzpicture}[scale=0.8]
  \tikzset{vertex/.style={draw,circle,inner sep=1pt, fill=black}}
   \tikzset{overtex/.style={draw,circle,inner sep=1pt}}
    
    \node[vertex,label=below:{\tiny $j$}] (1) at (0,0) { };
    \node[vertex,label=below:{\tiny $k$}] (2) at (1,0) { };
    \node[overtex,label=below:{\tiny $i$}] (3) at (2,0) { };
    \node[overtex,label=below:{\tiny $i+1$}] (4) at (3,0) { };

    \draw (1) to [out=30,in=150] (3);
    \draw (2) to [out=30,in=150] (4);
\end{tikzpicture}}  \hskip .5cm or \hskip .5cm  \raisebox{-.25\height}{\begin{tikzpicture}[scale=0.8]
  \tikzset{vertex/.style={draw,circle,inner sep=1pt, fill=black}}
   \tikzset{overtex/.style={draw,circle,inner sep=1pt}}
    
     \node[vertex,label=below:{\tiny $j$}] (1) at (0,0) { };
    \node[overtex,label=below:{\tiny $k$}] (2) at (1,0) { };
    \node[overtex,label=below:{\tiny $i$}] (3) at (2,0) { };
    \node[overtex,label=below:{\tiny $i+1$}] (4) at (3,0) { };

    \draw (1) to [out=30,in=150] (4);
    \draw (2) to [out=30,in=150] (3);
\end{tikzpicture}}  \hskip .5cm or \hskip .5cm  \raisebox{-.25\height}{\begin{tikzpicture}[scale=0.8]
  \tikzset{vertex/.style={draw,circle,inner sep=1pt, fill=black}}
   \tikzset{overtex/.style={draw,circle,inner sep=1pt}}
    
    \node[vertex,label=below:{\tiny $j$}] (1) at (0,0) { };
    \node[overtex,label=below:{\tiny $k$}] (2) at (1,0) { };
    \node[overtex,label=below:{\tiny $i$}] (3) at (2,0) { };
    \node[overtex,label=below:{\tiny $i+1$}] (4) at (3,0) { };

    \draw (1) to [out=30,in=150] (3);
    \draw (2) to [out=30,in=150] (4);
\end{tikzpicture}} \end{center} The proof is analogous to the one for the first case.
  
         \item Follows from the idempotent action of $s_i$ which switches two normal forms $q$ and $q'$.
    \end{enumerate}
 \end{proof}
 \begin{rem}  Let $P:= \langle s_i,B \rangle$, the simple parabolic subgroup, let $q$ be normal and $P_q$ the stabilizer of $q$ in $P$. We denote by $\Phi(P_q)$, the group of $2\times 2$ matrices:
   $$\Phi(P_q) := \left\{ \begin{pmatrix} y_{i,i} & y_{i,i+1} \\ y_{i+1,i} & y_{i+1,i+1} \end{pmatrix} \ \mid \ y \in P_q \ \right\}\backslash \KK^*\Bone_2 \subseteq PGL(2,\KK).$$  Clearly $\Phi(P_q)$ is determined by the normal component(s) of $q$ containing $x_i$ and $x_{i+1}$. This group will be conjugate to one of the following groups \cite{knop1995set}:
   \begin{enumerate}
   \item $G_0 := PGL(2,\KK)$, 
   \item $T_0:=T(2,\KK) \subset G_0$, the group of diagonal matrices.
   \item $N_0 := T_0 \cup  \begin{pmatrix} 0 & 1 \\ 1 & 0 \end{pmatrix}T_0 \subset G_0$,
   \item $U_0 := U(2,\KK) \subset G_0$, the unitary group of upper-triangular matrices with diagonal entries equal to $1$.
     \end{enumerate}

 \end{rem}
 \begin{cor}\label{lem-pstab} Let $q$ be normal. Then $\Phi(P_q)$  is as follows.
   \begin{center}
     \begin{tabular}{ccc}
     Case&   Conditions on $q$ & $\Phi(P_q)$ conjugate to \\
       \hline
     (1)&  $i,i+1 \not\in \ind(q)$ & $ G_0$ \\
      (2) & $(i,i+1)$ is an index pair for $q$ & $N_0$ \\
       (3) & \begin{tabular}{c}  $q= \epsilon_j x_i^2 + x_ix_j +\epsilon_{k}x_{i+1}^2+x_{i+1}x_k +p$ \\ where $\epsilon_{\max(j,k)}=1$ \\
    OR  $q = \epsilon_jx_j^2 + x_jx_i + \epsilon_i x_{k}^2+x_{k}x_{i+1} +p$ \\   where $\epsilon_{\min(j,k)}=1$   \\ (where $i,i+1,j,k \not\in \ind(p)$)\end{tabular}
 & $T_0$ \\
     
     (4)&  $s_iq = q'$ with $q\not= q'$ and $q'$ normal & $U_0$ \end{tabular} \end{center}
  
\end{cor}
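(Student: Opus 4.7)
The plan is to proceed case by case, using the equation $g \cdot q = q$ on a generic $g \in P_q$ to solve for the $(i,i+1)$-block $M = \begin{pmatrix} a & b \\ c & d \end{pmatrix}$. By part~2 of Theorem~\ref{lem-borbit}, the entries of $g$ acting on variables other than $x_i$ or $x_{i+1}$ can be chosen in the stabilizer of the sub-form $p$ consisting of the normal components of $q$ not containing $x_i$ or $x_{i+1}$. So the constraints on $M$ come from matching coefficients of monomials in $g \cdot q$ that contain $x_i$ or $x_{i+1}$, and the admissible set of $M$ modulo the scalar center $\KK^* \Bone_2$ is $\Phi(P_q)$, which one identifies with one of $G_0, T_0, N_0, U_0$ after (if necessary) an explicit $PGL(2,\KK)$-conjugation.

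Case~(1) is immediate, since there are no monomials in $q$ involving $x_i$ or $x_{i+1}$: every $M$ is admissible and $\Phi(P_q) = PGL(2,\KK) = G_0$. In case~(2), with the component $\epsilon x_i^2 + x_i x_{i+1}$, I would use the characteristic-$2$ identities $(ax_i + bx_{i+1})^2 = a^2 x_i^2 + b^2 x_{i+1}^2$ and $(ax_i + bx_{i+1})(cx_i + dx_{i+1}) = ac\,x_i^2 + (ad+bc)\,x_i x_{i+1} + bd\,x_{i+1}^2$ to obtain the system $\epsilon a^2 + ac = \epsilon$, $ad + bc = 1$, $\epsilon b^2 + bd = 0$. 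The solution set has two branches, which (after conjugation by $\begin{pmatrix} 1 & 0 \\ 1 & 1 \end{pmatrix}$ when $\epsilon = 1$) become the diagonal torus and its Weyl-reflected copy, yielding $N_0$. In case~(4), a parallel calculation, combined with the failure of cases~(1)--(3), forces one of $b, c$ to vanish and relates the remaining entries by a single linear relation, giving a $1$-parameter unipotent subgroup conjugate to $U_0$.

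Case~(3) is the main obstacle. Here $x_i$ and $x_{i+1}$ lie in two distinct normal components $\epsilon_j x_i^2 + x_i x_j$ and $\epsilon_k x_{i+1}^2 + x_{i+1} x_k$ (or the analogous $M_2$ configuration), and the action of $g$ on the variables $x_j, x_k$ contributes further terms to the coefficient equations. The crucial observation to exploit is that the cross monomials $x_{i+1} x_j$ and $x_i x_k$ do not appear in $q$: the vanishing of their coefficients in $g \cdot q$, combined with the invertibility of the leading $g$-coefficients on $x_j$ and $x_k$, forces $b = 0$ and $c = 0$, so that $M$ must be diagonal. The compatibility condition $\epsilon_{\max(j,k)} = 1$ (respectively $\epsilon_{\min(j,k)} = 1$) from Lemma~\ref{lem-porbit}, which singles out the unique normal form in each $B$-orbit, is then exactly what ensures the remaining diagonal constraints cut out $T_0$ rather than a larger subgroup. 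The bookkeeping needed to track how entries of $g$ outside the $(i,i+1)$-block feed back into the coefficient equations on $x_i x_j$, $x_{i+1} x_k$ (and the dual monomials) is the delicate step in this case.
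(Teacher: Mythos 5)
Your route is genuinely different from the paper's: the paper deduces the corollary in one stroke from the $B$-orbit decomposition of $Pq$ in Lemma~\ref{lem-porbit} together with Knop's short list of admissible subgroups (one $B$-orbit in $Pq$ gives $G_0$, three give $T_0$, two give $N_0$ or $U_0$ according to whether $s_i$ preserves or drops rank), whereas you compute the $(i,i+1)$-block of $P_q$ directly from coefficient equations. That is a legitimate and more self-contained strategy, and your cases (1) and (2) are correct: the system $\epsilon a^2+ac=\epsilon$, $ad+bc=1$, $\epsilon b^2+bd=0$ does cut out a diagonal/antidiagonal pair of branches, and since $x_i^2+x_ix_{i+1}=x_i(x_i+x_{i+1})$ the change of variable $x_{i+1}\mapsto x_i+x_{i+1}$ conjugates the $\epsilon=1$ solution set to $N_0$.

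However, your case (3) contains a genuine error, precisely at the step you flag as delicate. The vanishing of $\coef(x_ix_k, g\cdot q)$ does \emph{not} force $c=0$, because the image of $x_j$ may involve $x_k$: that coefficient is $a\,g_{j,k}+c\,g_{k,k}$ (for $j<k$), a linear relation tying $c$ to the off-block entry $g_{j,k}$ rather than killing it. Indeed, when $\epsilon_j=\epsilon_k=1$ the equation $\coef(x_i^2,g\cdot q)=1$ reads $a^2+c^2=1$, i.e.\ $c=a+1\neq 0$ in general. Concretely, for $q=x_1^2+x_1x_3+x_2^2+x_2x_4$ and $i=1$, the element $g\in P_1$ with $g\cdot x_1=ax_1$, $g\cdot x_2=(a+1)x_1+x_2$, $g\cdot x_3=a^{-1}x_3+(1+a^{-1})x_4$, $g\cdot x_4=x_4$ stabilizes $q$ for every $a\in\KK^*$, and its block $\begin{pmatrix} a & 0\\ a+1 & 1\end{pmatrix}$ is not diagonal; the set of all such blocks is the one-parameter torus fixing the two points $[0:1]$ and $[1:1]$ of $\mathbb{P}^1$, hence conjugate to $T_0$ but not equal to it. So the conclusion of case (3) survives, but not by your mechanism, and your reading of $\epsilon_{\max(j,k)}=1$ as ``ensuring $T_0$ rather than a larger subgroup'' is off: its actual role (visible in Lemma~\ref{lem-porbit}) is to separate case (3) from case (4), since with $\epsilon_{\max(j,k)}=0$ the form $s_iq$ is already normal. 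A related caution applies to your case (4): for degenerate normal $q$, e.g.\ $q=x_1x_3$ with $i=1$, the block computation forces only $b=0$ and leaves $a,c,d$ free, so one obtains the full lower-triangular Borel modulo scalars, not a one-parameter unipotent group; literal conjugacy to $U_0$ needs extra hypotheses (e.g.\ $q\in Q_n$, where $B_q$ is unipotent) or Knop's coarser type-$U$ reading. The paper's argument via the orbit count in Lemma~\ref{lem-porbit} sidesteps exactly these feedback terms, which is what the one-line proof is buying.
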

\begin{proof} The proof follows from Lemma~\ref{lem-porbit}. For case (2), we notice that if $s_iq=q'$ with $q' \not= q$ and $q'$ normal, then the normal component(s) of $q$ containing $i$ and/or $i+1$ are of the type $\epsilon_jx_j^2 + \delta_jx_jx_i + \epsilon_{k}x_k^2 + \delta_kx_kx_{i+1}$ or  $\epsilon_jx_j^2 + \delta_jx_jx_i + \epsilon_{i+1}x_{i+1}^2 + \delta_{i+1}x_{i+1}x_{k}$ or  $\epsilon_ix_i^2 + \delta_ix_ix_j + \epsilon_{k}x_k^2 + \delta_kx_kx_{i+1}$ with and appropriate conditions on $j,k$ with respect to $i,i+1$ and on the various $\epsilon_*, \delta_* \in \{0,1\}$.
  \end{proof}
  \begin{rem} With the information from the previous Lemma, we can construct the so-called {\bf Brion graph} $G_n$. The vertices are labeled by normal quadratic forms. An edge with label $i$ connects two vertices $q$ and $q'$ if $Bq'  \subset P_iq$. Moreover, the vertex $q$ is placed above $q'$ if $\rank Bq > \rank Bq'$. The various possibilities, given in Lemmas~\ref{lem-porbit} and~\ref{lem-pstab}, are summarized in the following table. Here $P = P_i$.
    \begin{center}
      \begin{tabular}{ccc}
        $\Phi(P_q)$ & $P q$ & Graph \\
   
        \hline
        $G_0$ & $B q$ &  \begin{tikzpicture}
  \tikzset{vertex/.style={draw,circle,inner sep=1.5pt, fill=black}}
  \node[vertex,label = left:$q$] (1) at (0,0)  { }; \end{tikzpicture} \\      \ & \ & \ \\
        $U_0$ & $B q \cup B q'$ & \multirow{2}*{ \begin{tikzpicture}[scale=1]
  \tikzset{vertex/.style={draw,circle,inner sep=1.5pt, fill=black}}
    
    \node[vertex,label=right:{\small $q$}] (1) at (0,0) { };
    \node[vertex,label=right:{\small $q'$}] (2) at (0,0.8) { };
 
    \draw (1) -- (2) node[midway,left] {\small $i$};
\end{tikzpicture}} \\
        & $\rank(q') = \rank(q)-1$ & \\       \ & \ & \ \\
        $T_0$ & $Bq \cup Bq' \cup Bq''$ & \multirow{3}*{\begin{tikzpicture}[scale=1]
  \tikzset{vertex/.style={draw,circle,inner sep=1.5pt, fill=black}}
   
    \node[vertex,label=right:{\small $q$}] (2) at (0.8,0.8) { };
    \node[vertex,label=left:{\small $q'$}] (1) at (0,0) { };
    \node[vertex,label=right:{\small $q''$}] (3) at (1.6,0) { };
 
    \draw (1) -- (2) node[midway,left] {\small $i$};
      \draw (2) -- (3) node[midway,right] {\small $i$};
\end{tikzpicture}  }\\
                    & $\rank(q') = \rank(q'') = \rank(q)-1$ &  \\
         & & \\       \ & \ & \ \\
            $N_0$ & $Bq \cup Bq'$ & \multirow{2}*{ \begin{tikzpicture}
  \tikzset{vertex/.style={draw,circle,inner sep=1.5pt, fill=black}}
\node[vertex,label = right:$q$] (1) at (0,0.8)  { }; 
  \node[vertex,label = right: $q' $] (2) at (0,0)  { };
\draw[thick] (1.110) -- (2.250) node[midway,left] {\small $i$};
\draw[thick] (1.060) -- (2.290);
\end{tikzpicture}}\\
               & $\rank(q) > \rank(q')$ & 
      \end{tabular}
      \end{center}

    \end{rem}
 \medskip
                                                                 \begin{example}
                                                                   For $n = 2$ we have the following Brion graph $G_2$. The edges denote the action of $s_1 = (1\ 2)$.

                                                                   \begin{center}
                                                                     \begin{tikzpicture}[scale = 0.8]
  \tikzset{vertex/.style={draw,circle,inner sep=1.5pt, fill=black}}
  \node[vertex,label = below:{\footnotesize $0$}] (1) at (0,0)  { };
  \node[vertex,label = below: {\footnotesize $x_2^2$}] (2) at (2,0)  { };
  \node[vertex,label = below:{\footnotesize $x_1x_2$}] (3) at (4,0)  { };
  \node[vertex,label = {\footnotesize $x_1^2$}] (4) at (2,1)  { };
  \node[vertex,label = {\footnotesize $x_1^2+x_1x_2$}] (5) at (4,1)  { };

  \draw[thick] (2) to (4);
  \draw[thick] (3.110) to (5.250);
  \draw[thick] (3.060) to (5.290);
\end{tikzpicture}
                                                               \end{center}

                                                               For $n=3$ we have the following Brion graph $G_3$. The action of $s_1 = (1\ 2)$ is denoted by the label $1$, the action of $s_2$ by the label $2 = (2\ 3)$.
                                                                \begin{center}
                                                                 \begin{tikzpicture}[scale=0.8]
  \tikzset{vertex/.style={draw,circle,inner sep=1.5pt, fill=black}}
  \node[vertex,label = left: {\footnotesize $0$}] (1) at (0,0)  { }; 
  \node[vertex,label = left:  {\footnotesize $x_3^2$}] (2) at (1.8,0)  { };
  \node[vertex,label = left: {\footnotesize  $x_2^2$}] (3) at (1.8,1)  { };
   \node[vertex,label = left: {\footnotesize  $x_1^2$}] (4) at (1.8,2)  { };
  \node[vertex,label = below: {\footnotesize $x_2x_3$}] (5) at (5,0)  { };
   \node[vertex,label = left: {\footnotesize $x_1x_3$}] (6) at (4,1)  { };
   \node[vertex,label = right: {\footnotesize $x_2^2+x_2x_3$}] (7) at (6,1)  { };
     \node[vertex,label = left: {\footnotesize $x_1x_2$}] (8) at (4,2)  { };
  \node[vertex,label = right: {\footnotesize $x_1^2+x_1x_3$}] (9) at (6,2)  { };
  \node[vertex,label =  {\footnotesize $x_1^2+x_1x_2$}] (10) at (5,3)  { };
    \node[vertex,label = right: {\footnotesize $x_1^2+x_2x_3$}] (11) at (13,0)  { }; 
    \node[vertex,label = left: {\footnotesize $x_1x_3 +x_2^2$}] (12) at (11,0)  { };
    \node[vertex,label = left: {\footnotesize $x_1x_2 +x_3^2$}] (13) at (11,1)  { };
       \node[vertex,label = above: {\footnotesize $x_1^2+x_1x_2 +x_3^2$}] (14) at (12,2)  { };
       \node[vertex,label = right: {\footnotesize $x_1^2+x_1x_3 +x_2^2$}] (15) at (13,1)  { };

       \draw[thick] (2) -- (3)  node[midway,right] {\footnotesize $2$};
       \draw[thick] (3) -- (4) node[midway,right] {\footnotesize $1$};
       \draw[thick] (5) -- (6) node[midway,left] {\footnotesize $1$};
       \draw[thick] (6) -- (8)  node[midway,left] {\footnotesize $2$};
       \draw[thick] (8.065) -- (10.205)  node[midway,left] {\footnotesize $1$};
       \draw[thick] (8.025) -- (10.245) ;
       \draw[thick] (10) -- (9)  node[midway,left] {\footnotesize $2$};
       \draw[thick] (9) -- (7)  node[midway,left] {\footnotesize $1$};
       \draw[thick] (5.065) -- (7.205)  node[midway,left] {\footnotesize $2$};
       \draw[thick] (5.025) -- (7.245) ;
      
       \draw[thick] (12) -- (13)  node[midway,left] {\footnotesize $2$};
       \draw[thick] (13.065) -- (14.205)   node[midway,left] {\footnotesize $1$};
       \draw[thick] (13.025) -- (14.245) ;
       \draw[thick] (14) -- (15)  node[midway,right] {\footnotesize $2$};
       \draw[thick] (11) -- (15)  node[midway,left] {\footnotesize $1$};
        \draw[thick] (15) -- (12)  node[midway,above] {\footnotesize $1$};

  \end{tikzpicture}
                                                                     \end{center}

\end{example}

\section{ Borel orbit covers and the action of $S_n$}

As already remarked in \cite{knop1995set}, there is generally no natural Weyl group action on the set of $B$-orbits in the characteristic $2$ case. For example, there are five non-degenerate $B$-orbits in $V_3$, namely those with normal representatives $$ x_1^2+x_1x_2 + x_3^2, \ \ x_1^2+x_1x_3 + x_2^2, \ \ x_1x_2 + x_3^2, \ \ x_1x_3 + x_2^2, \ \ x_1^2+x_2x_3.$$
According to the procedure described in \cite{knop1995set} for $\chr p \not= 2$, the simple reflection $s_1$ would fix the Borel orbits associated to the first two normal forms, but $s_2$ interchanges them. In other words, the braid relation $s_1s_2s_1 = s_2s_1s_2$ is not respected. As shown by Knop, instead of considering $B$-orbits, one should consider their double covers, or equivalently, the subgroups of index $\leq 2$ of the isotropy group $B_q$ of the quadratic form $q$.

For the rest of this article we will restrict ourselves to the set $Q_n$ of non-degenerate quadratic normal forms $q$ in $n$ variables which are of maximal $B$-rank $n$. As it turns out, the stabilizer $B_q$ is unipotent.

We compute first $B_q$ for quadratic forms $q$ with a connected diagram.

   \begin{lem}\label{lem-unistab} Let $r= \lfloor \frac{n}{2} \rfloor$ and $q = \sum_{k=1}^rx_{i_k}^2 + x_{i_k}x_{j_k} + \epsilon x_{i_{r+1}} \in Q_n$ be connected. So $\epsilon = 0$ for $n$ even and $\epsilon = 1$ for $n$ odd. Then the stabilizer $B_q$ consists of all elements $u = (u_{k,l}) \in B_q$ which satisfy: \begin{enumerate} \item\label{cond-1} $u_{k,l} = 0$ if $l \not= k =j_s$ for some $s$ or $(k,l)=(i_s,i_t)$ for some $1 \leq s \not= t \leq r$,
     \item\label{cond-2} $u_{i_s, j_t} = u_{i_t, j_s}$ for all $1\leq s \not= t \leq r$,
     \item\label{cond-3} $u_{i_1, j_1} + u_{i_2,j_2} + \cdots + u_{i_r, j_r} \in \begin{cases}  \KK & n \mbox{ odd and } i_{r+1} < 2r+1   \\
  \{0, 1\}    &  \mbox{ otherwise }. \end{cases}$
       \end{enumerate}
   \end{lem}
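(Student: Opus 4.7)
The plan is to compute $u\cdot q$ monomial-by-monomial and read off the three conditions from the equations $u\cdot q = q$, building outward from what Theorem~\ref{lem-borbit} already delivers.

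For condition (\ref{cond-1}), I note that every $\epsilon_s=1$ here (by Lemma~\ref{lem-orbcatalan}), so Theorem~\ref{lem-borbit}(2) immediately yields $u_{j_t,z}=0$ for $z\notin\{j_1,\dots,j_t\}$ and $u_{i_s,i_t}=0$ for $s<t$; the case $s>t$ is automatic because $B$ is upper-triangular. To clean out the rows $j_t$, I use that in the maximal-rank (non-nesting) setting one has $s<t \Leftrightarrow j_s<j_t$, so any column index $z>j_t$ equal to some $j_{t'}$ forces $t'>t$ and hence lies outside $\{j_1,\dots,j_t\}$. Together with Lemma~\ref{lem-brank}, which forces $u_{i_t,i_t}=u_{j_t,j_t}=1$, this establishes condition (\ref{cond-1}).

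For condition (\ref{cond-2}), the vanishings above simplify the action to
\[
u\cdot x_{i_l} \;=\; x_{i_l} + \sum_{t':\,j_{t'}>i_l} u_{i_l,j_{t'}}\,x_{j_{t'}}, \qquad u\cdot x_{j_l} \;=\; x_{j_l},
\]
so $u\cdot(x_{i_l}x_{j_l}) = x_{i_l}x_{j_l} + \sum_{t'} u_{i_l,j_{t'}}\,x_{j_{t'}}x_{j_l}$, while $u\cdot x_{i_l}^2$ and $u\cdot x_{i_{r+1}}^2$ are pure-square sums in characteristic $2$. For $s\neq t$ the monomial $x_{j_s}x_{j_t}$ is absent from $q$, and its coefficient in $u\cdot q$ receives exactly two contributions, from $(l,t')=(s,t)$ and $(l,t')=(t,s)$; equating their sum to zero yields $u_{i_s,j_t}+u_{i_t,j_s}=0$, which is condition (\ref{cond-2}). (If the two index pairs are disjoint in the matching, $u_{i_t,j_s}$ is below-diagonal and the identity degenerates to $u_{i_s,j_t}=0$.)

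For condition (\ref{cond-3}), setting $\coef(x_{j_t}^2,u\cdot q)=0$ yields, for each $t$,
\[
\sum_{l=1}^{r} u_{i_l,j_t}^{\,2} \;+\; u_{i_t,j_t} \;+\; \epsilon\,u_{i_{r+1},j_t}^{\,2} \;=\; 0.
\]
Summing over $t$ and writing $S:=\sum_{t=1}^{r} u_{i_t,j_t}$, condition (\ref{cond-2}) gives $u_{i_s,j_t}^2 = u_{i_t,j_s}^2$ for $s\neq t$, so the off-diagonal contributions to $\sum_{l,t}u_{i_l,j_t}^2$ cancel in pairs, leaving $\sum_t u_{i_t,j_t}^2 = S^2$ by the characteristic-$2$ Frobenius. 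Thus $S^2+S+\epsilon\sum_{t} u_{i_{r+1},j_t}^{\,2}=0$. When $\epsilon=0$, or when $\epsilon=1$ and $i_{r+1}=2r+1$ (so every $u_{i_{r+1},j_t}$ vanishes by upper-triangularity), this collapses to $S(S+1)=0$ and hence $S\in\{0,1\}$. Otherwise at least one $u_{i_{r+1},j_t}$ is a genuine above-diagonal entry; a quick check of $\coef(x_{i_{r+1}}^2,u\cdot q)$ and $\coef(x_{i_{r+1}}x_{j_t},u\cdot q)$ shows it is unconstrained by any other coefficient equation, so quadratic closure of $\KK$ lets it be chosen to realise any prescribed value of $S\in\KK$.

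The main obstacle is the bookkeeping in the odd case with $i_{r+1}<2r+1$: verifying that the candidate free entries $u_{i_{r+1},j_t}$ really escape every other monomial equation, and that the cancellation in the summed $x_{j_t}^2$-identities goes through without residue. Both steps implicitly use the connectedness hypothesis on $q$, reformulated via the extension $\tilde q$ of Remark~\ref{rem-1} as a single non-nesting matching on $\{1,\dots,n+1\}$.
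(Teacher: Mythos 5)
Your proposal is correct and rests on the same underlying mechanism as the paper's proof --- comparing coefficients of $u\cdot q$ with those of $q$ monomial by monomial --- but you reach condition (1) by a genuinely shorter route. The paper re-derives the vanishing of the rows $j_s$ and of the entries $u_{i_s,i_t}$ from scratch, by induction on $s$ through the coefficient equations for $\coef(x_{i_s}x_l,u\cdot q)$; you instead quote Theorem~1, part 2 (legitimately, since all $\epsilon_t=1$ here by Lemma~4), combine it with the observation that non-nesting forces $j_1<\cdots<j_r$ so that the surviving columns $j_1,\dots,j_t$ in row $j_t$ all lie on or below the diagonal, and import the unipotence of the diagonal from the computation in Lemma~2. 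This buys brevity and makes visible that this part of the statement does not really use connectedness. Conditions (2) and (3) you then prove exactly as the paper does: the identity $u_{i_s,j_t}+u_{i_t,j_s}=0$ from $\coef(x_{j_s}x_{j_t},u\cdot q)=0$, and the per-$t$ identity $\sum_l u_{i_l,j_t}^2+u_{i_t,j_t}+\epsilon\,u_{i_{r+1},j_t}^2=0$ summed over $t$, with cross terms cancelling via condition (2). Notably, your parenthetical about disjoint index pairs repairs a small slip in the paper, which asserts that two index pairs of a connected maximal-rank form must cross (``either $i_t<i_s<j_t<j_s$ or $i_s<i_t<j_s<j_t$''); for $r\geq 3$ disjoint pairs do occur in connected non-nesting matchings (e.g.\ $(1,3)$, $(2,5)$, $(4,6)$), and there the paper's claim ``$i_s<j_t$ iff $i_t<j_s$'' fails --- the cancellation survives only because $u_{i_t,j_s}$ is below-diagonal and condition (2) then kills $u_{i_s,j_t}$, which is precisely your argument.

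One imprecision to fix: in the odd case with $i_{r+1}<2r+1$, the entries $u_{i_{r+1},j_t}$ are not ``unconstrained by any other coefficient equation.'' Each per-$t$ equation itself determines $u_{i_{r+1},j_t}$ uniquely as the square root of $\sum_l u_{i_l,j_t}^2+u_{i_t,j_t}$ (square roots are unique in characteristic $2$, and exist since $\KK$ is quadratically closed); moreover, for those $t$ with $j_t<i_{r+1}$ the entry is forced to vanish by upper-triangularity, and the per-$t$ equation then becomes a genuine constraint on the remaining entries in column $j_t$ --- a constraint appearing neither in your account nor, for that matter, in the lemma's statement. Your realization of an arbitrary $S\in\KK$ nevertheless goes through: since $i_{r+1}<2r+1=n$ forces $j_r=n>i_{r+1}$, one may set $u_{i_r,j_r}=S$, all other free entries to $0$, and solve $u_{i_{r+1},j_r}=(S^2+S)^{1/2}$, after which all remaining coefficient equations hold. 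Spelling this out would also discharge the sufficiency direction, which you --- like the paper, whose proof ends with a one-line appeal to its displayed equations --- leave essentially implicit.
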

   \begin{proof}
     Let $u = (u_{k,l}) \in B_q$ be generic. We have $u_{k,l} = \delta_{k,l}$ for $k\geq l$. We first show that $u \in B_q$ satisfies conditions \ref{cond-1}. -- \ref{cond-3}.

     \noindent Notice that since $q$ is normal, of maximal rank, and connected, we have that $i_s< i_{s+1} < j_s < j_{s+1}$ for all $s < n$. Therefore, $u_{i_t,i_s} = u_{j_t,j_s} =0$ for $t>s$ and $u_{j_t,i_s} = 0$ for $t \geq s-1$.
     
     {\bf Proof of 1.:} We use induction on $s$. For $s=1$ we have for $l \not= i_1$: \begin{equation}\label{eq1} \delta_{l j_1} =  \coef(x_{i_1}x_l,q)=\coef(x_{i_1}x_{l}, u  q)  = \sum_{s = 1}^r\underbrace{u_{i_s i_1}}_{\delta_{s 1}} \underbrace{u_{j_s l}}_{\delta_{j_s l}}  +  \sum_{s = 1}^ru_{i_sl} \underbrace{u_{j_si_1}}_{=0}=  u_{j_1 j_t}  \end{equation} where $\delta_{g h}$ is the Kronecker delta. Since $u_{j_1 i_1} = 0$ we have  $u_{j_1 l} = 0$ for all $l\not= j_1$.
     
     For $s=2$ we have for $l \not= i_2$:
     \begin{align}\label{eq2} \delta_{l j_2} &=  \coef(x_{i_2}x_l,q) = \coef(x_{i_2}x_{l}, u  q)   = \sum_{k = 1}^r\underbrace{u_{i_k i_2}}_{=0 \mbox{ {\tiny for} } k>2} u_{j_k l}  +  \sum_{k = 1}^ru_{i_k l} \underbrace{u_{j_k i_2}}_{=0}= \\ &= u_{i_1 i_2} u_{j_1 l} + u_{i_2 i_2}u_{j_2 l}  = \begin{cases}u_{i_1 i_2}  & l=j_1 \\ 1 &  l = j_2 \\ u_{j_2 l} & l \not= i_2, j_1,j_2 \end{cases}. \end{align} So $u_{i_1 i_2} = 0$ and, since $u_{j_2 j_1} = u_{j_2 i_2} = 0$, we have $u_{j_2 l} = 0$ for all $l\not= j_2$.

     We assume that our induction hypothesis holds for up to $s-1$. We have for $l \not= i_s$:
       \begin{align}\label{eq3} \delta_{l j_s} &=  \coef(x_{i_s}x_l,q) = \coef(x_{i_s}x_{l}, u  q) = \sum_{k = 1}^r\underbrace{u_{i_k i_s}}_{(=0 \mbox{ {\tiny for} } k > s)\ } \underbrace{u_{j_k l}}_{\ (\delta_{j_k l} \mbox{ {\tiny for }} k < s)}  +  \sum_{k = 1}^ru_{i_k l} \underbrace{u_{j_k i_s}}_{=0}= \\ & = \sum_{k=1}^su_{i_k i_s} \delta_{j_k l}   = \begin{cases}u_{i_k i_s}   & l=j_1, \ldots, j_{s-1} \mbox{ (induc.hyp.)}, \\ 1 &  l = j_s, \\ u_{j_s l} & l \not= j_1,\ldots, j_s, i_s \end{cases}. \end{align} 
   So $u_{i_1 i_s}= \cdots = u_{i_{s-1} i_s} = 0$ and, since $u_{j_s j_1} = \cdots = u_{j_s j_{s-1}} = u_{j_s i_s} = 0$, then $u_{j_s l} = 0$ for all $l \not= j_s$. Therefore by induction Condition \ref{cond-1}. holds. 
                   
                       \noindent
                     {\bf  Proof of \ref{cond-2}.:} This follows from
                       \begin{align}\label{eq4} 0 = \coef(x_{j_s}x_{j_t}, u  q) & = \sum_{k}u_{i_k j_s}\underbrace{u_{j_k j_t}}_{ = 0 \mbox{ for } k \not= t} + \sum_{k}u_{i_k j_t}\underbrace{u_{j_k j_s} }_{ = 0 \mbox{ for } k \not=s} \\ & = u_{i_t j_s} u_{j_t j_t} + u_{i_s j_t}u_{j_s j_s} =  u_{i_t j_s} + u_{i_s j_t} \end{align}

                       \noindent
                       {\bf Proof of \ref{cond-3}.:}  We consider $ 0 = \sum_{t=1}^r\coef(x_{j_t}^2, q) = $ \begin{align}\label{eq5} = \sum_{t=1}^r\coef(x_{j_t}^2, u   q)& = \sum_{s=1}^r \left( \sum_{i_s \leq j_t} u_{i_s j_t}^2 + \sum_{s \leq t}u_{i_s j_t} \underbrace{u_{j_s j_t}}_{ = 0 \mbox{ for } s \not= t}  \right) + \epsilon\left( \sum_{t=1}^r u_{i_{r+1}j_t}^2\right) \\
                         & = \sum_{t=1}^r \left( \sum_{i_s \leq j_t} u_{i_s j_t}^2 + u_{i_t j_t}\right)  + \epsilon\left( \sum_{t=1}^r u_{i_{r+1} j_t}^2\right) \end{align}
                       Since $q$ is of maximal $B$-rank and normal then we have two possibilities for the relative positioning of $(i_t,j_t)$ and $(i_s, j_s)$ for $t \not= s$, $s,t \not= r+1$. Either $i_t < i_s < j_t < j_s$ or $i_s < i_t < j_s < j_t$. Either way, $i_s < j_t$ if and only if $i_t < j_s$. So both $u_{i_s j_t}$ and $u_{i_t j_s}$ occur in the sum on the left, cancelling each other out. In other words, we have
                       $$0 = \sum_{t=1}^r (u_{i_t j_t}^2 + u_{i_t j_t})   + \epsilon\left( \sum_{t=1}^r u_{i_{r+1}j_t}^2\right) .$$ When $n$ is even, $\epsilon = 0$. When $i_{r+1} = n$ then all $j_t < i_{r+1}$ so the $ u_{i_{r+1}j_t}=0$ for all $t$. In these cases our equation reduces to $$ 0 = \left(\sum_{t=1}^ru_{i_t j_t} \right)^2 + \sum_{t=1}^ru_{i_t j_t}.$$ So $\sum_{t=1}^ru_{i_t j_t}$ is a root of the polynomial $x^2 + x$, proving the claim.

                       When $n$ is odd and $i_{r+1} < n$ then $\epsilon = 1$ and the $u_{i_{r+1},j_t} \in \KK$ for all $t$.  The equation is therefore
                       $$  \left(\sum_{t=1}^ru_{i_t j_t} \right)^2 + \sum_{t=1}^ru_{i_t j_t} = \left( \sum_{t=1}^r u_{i_{r+1}j_t}\right)^2,$$ proving our claim.

                       That an element $u = (u_{k,l})$ satisfying conditions \ref{cond-1}. - \ref{cond-3} also satisfies $u \in B_q$ follows from equations \ref{eq1} - \ref{eq5}.
                     \end{proof}
                      \begin{lem}\label{lem-disconnect} Let $q = q_1 + q_2 \in Q_n$ be such that $\max(\ind(q_1)) < \min(\ind(q_2))$. Then for every element $u \in B_q $ we have $u_{g,h} = 0$ for every $(g,h) \in \ind(q_1) \times \ind(q_2)$.  In particular, $B_q \cong B_{q_1} \times B_{q_2}$.

 \end{lem}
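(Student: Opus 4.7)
The plan is to prove that every $u \in B_q$ is block-diagonal with respect to the partition $\{1,\dots,n\} = I_1 \sqcup I_2$, where $I_i := \ind(q_i)$. Once this is established, $uq = u_1 q_1 + u_2 q_2$, so $uq = q$ decouples into $u_1 q_1 = q_1$ and $u_2 q_2 = q_2$, yielding the isomorphism $B_q \cong B_{q_1}\times B_{q_2}$ via $u \mapsto (u_1,u_2)$. Write $q = \sum_{k=1}^r(x_{i_k}^2+x_{i_k}x_{j_k}) + \epsilon\, x_{i_{r+1}}^2$. Since every $\epsilon_t = 1$ in $Q_n$, condition (C1) of Theorem~\ref{lem-borbit}.1 forces the singleton, if present, to be the last component; thus $i_{r+1} \in I_2$, and $I_1$ consists solely of $i$- and $j$-indices coming from pair components.

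Fix $u = (u_{k,l}) \in B_q$ and set $m := \max I_1$. Theorem~\ref{lem-borbit}.2 Part~1 gives $u_{j_k,z} = 0$ for $z \notin \{j_1,\dots,j_k\} \subseteq I_1$; applied to $k \in q_1$, this kills every cross entry $u_{j_k,h}$ with $h \in I_2$, covering both $h = i_t$ and $h = j_t$ for pairs of $q_2$. Theorem~\ref{lem-borbit}.2 Part~2, applied with $\epsilon_s = \epsilon_t = 1$, gives $u_{i_k,i_t} = 0$ for all $k < t$; this kills every cross entry $u_{i_k,i_t}$ with $i_k \in I_1$, $i_t \in I_2$, and also the singleton case $u_{i_k,i_{r+1}}$. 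What remains are the entries $u_{i_k,j_t}$ for $k \in q_1$ and $t \in q_2$ both pair-indices.

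To kill these, I would expand $\coef(x_{j_s}x_{j_t}, uq) = 0$ for a fixed pair $t$ of $q_2$ and $s$ ranging over pairs of $q_1$. In characteristic $2$, $u \cdot x_i^2 = \sum_l u_{i,l}^2 x_l^2$ contributes only squares, so only the pair components in $q$ contribute, giving
\begin{equation*}
0 = \sum_{k=1}^r \bigl[u_{i_k,j_s}u_{j_k,j_t} + u_{i_k,j_t}u_{j_k,j_s}\bigr].
\end{equation*}
For $k \in q_2$, both $u_{i_k,j_s}$ and $u_{j_k,j_s}$ vanish by upper triangularity since $j_s \leq m < i_k, j_k$. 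For $k \in q_1$, the factor $u_{j_k,j_t}$ vanishes by the Part~1 reduction. The surviving equation is the linear system
\begin{equation*}
\sum_{k \in q_1} u_{j_k,j_s}\, u_{i_k,j_t} = 0, \qquad s \in q_1,
\end{equation*}
in the unknowns $u_{i_k,j_t}$, $k \in q_1$. Its matrix $M_{s,k} = u_{j_k,j_s}$, after indexing rows and columns by the values $j_k$ in their natural order, is (up to transpose) a principal submatrix of the upper unitriangular matrix $u$, with all diagonal entries equal to $1$ by Lemma~\ref{lem-brank}. Hence $M$ is invertible and $u_{i_k,j_t} = 0$ for every $k \in q_1$, completing the vanishing of the cross block. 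I expect the main technical obstacle to be tracking which terms drop out of the coefficient sum to reduce it to this invertible linear system; the invertibility itself is automatic from the triangular structure, and the stated isomorphism then follows from the block decomposition.
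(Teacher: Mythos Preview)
Your argument is correct. It differs from the paper's in two organizational respects worth noting.

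First, the paper peels off one connected component at a time: it assumes $q_1$ is connected, invokes Lemma~\ref{lem-unistab} for the vanishing inside $q_1$, and then iterates. You instead work with an arbitrary $q_1$ and appeal directly to Theorem~\ref{lem-borbit}.2, which applies to any normal $q$; this lets you avoid the iteration entirely.

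Second, for the remaining cross entries $u_{i_k,j_t}$ with $k\in q_1$, $t\in q_2$, the paper expands $\coef(x_{j_s}x_h, uq)$ with $h\in I_2$ arbitrary. After cancellation this yields the single equation $u_{i_s,h}=0$ directly (one unknown per equation), because the second sum collapses via $u_{j_k,j_s}=\delta_{k,s}$ from condition~1. You instead specialize to $h=j_t$ and obtain a square linear system whose coefficient matrix is a principal submatrix of $u$, hence unitriangular and invertible. Both routes work; the paper's is marginally shorter since it never needs to invert anything, while yours is cleaner in that it does not depend on connectedness of $q_1$ to control which $u_{i_k,j_s}$ might be nonzero inside the first sum.

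One small point: in your appeal to Theorem~\ref{lem-borbit}.2 Part~1, you should remark that for a pair index $k$ belonging to $q_1$ all of $j_1,\dots,j_k$ lie in $I_1$ (since $i_1<\cdots<i_k\le \max I_1$ forces each such component to sit entirely in $q_1$), so that $h\in I_2$ indeed falls outside $\{j_1,\dots,j_k\}$. This is implicit but worth stating.
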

 \begin{proof} Let $q_1$ be connected with index pairs $\{(i_k, j_k) | 1 \leq k \leq t \}$ for which $i_1 < \cdots < i_t$. Let $g = j_s \in \ind(q_1)$ and $h \in \ind(q_2)$. Since $q_1$ is connected then by Lemma~\ref{lem-unistab} we have \begin{equation}\label{eqn-dis1} 0  = \coef(x_{i_s}x_h, q) =\coef(x_{i_s}x_h, u q) = \sum_k \underbrace{u_{i_k i_s}}_{\delta_{k s}}u_{j_k h} +  \sum_k u_{i_k h}\underbrace{u_{j_k i_s}}_{=0} = u_{j_s h}. \end{equation}
   In the left-hand above sum in (\ref{eqn-dis1}), the term $u_{i_k i_s} = 0$ by Lemma~\ref{lem-unistab} for $k < s$ and by $i_k > i_s$ for $k>s$. The analogue holds for the term $u_{j_k i_s}$ in the right-hand sum of  (\ref{eqn-dis1}).

   For $g = i_s \in \ind(q_1)$ we have
   \begin{equation}\label{eqn-dis2} 0 = \coef(x_{j_s}x_h, q) =
     \coef(x_{j_s}x_h, u q) = \sum_ku_{i_k j_s}u_{j_k h} +  \sum_k u_{i_k h}u_{j_k j_s} = u_{i_s h} \end{equation}
   In the left-hand above sum in (\ref{eqn-dis2}), the term $u_{j_k h} = 0$ for $k \leq s$ by  (\ref{eqn-dis1}). For $k > s$ we have $i_k > j_s$ so the term $u_{i_k j_s} = 0$. The left-hand sum thereby equals $0$. The term $u_{j_k j_s}$ from the right-hand sum equals $0$ for all $k \not= s$, either due to Lemma~\ref{lem-unistab} or due to $j_k > j_s$. We have therefore $B_q \cong B_{q_1} \times B_{q_2}$.

   Using the same argument for $q_2 = q_2'+ q_3$ with $q_2'$ connected, we obtain $B_{q_2} \cong B_{q_2'} \times B_{q_3}$.  Repeating this argument, we see that the claim holds for all of $q$.
   
 \end{proof}

 \begin{rem} Let $q \in Q_n$.
   Using the notation from Lemma \ref{lem-unistab}, we consider the map $\phi:B_q  \rightarrow \BZ_2$, \begin{equation}  u \in B_q  \mapsto  \epsilon_u := \sum_{k=1}^ru_{i_k j_k} \in \{0,1\}. \end{equation} This map is a group homomorphism. Indeed, let $u, v \in B_q$ and $w:= uv$. Then
   \begin{equation} e_w = \sum_{k=1}^rw_{i_k j_k} = \sum_{k=1}^r \sum_{l=1}^ru_{i_k l}v_{l j_k}=  \sum_{k=1}^r (u_{i_kj_k} + v_{i_k j_k}) = \epsilon_u + \epsilon_v . \end{equation}
   The kernel of this homomorphism is the connected subgroup $B_q^0 := \phi^{-1}(0)$ of $B_q$. It follows $B_q/B_q^0 \cong \BZ_2$. For $u \in B_q$ with $\phi(u)=1$ we have that $B_q^0q = uB_q^0q= B_qq$ which leads us to the following definition.
   
 \end{rem}
 \begin{defn} Let $q \in Q_n$. A {\bf double cover} of $B_q$ is any subgroup $H$ of $B_q$ with $B_q^0 \subset H \subset B_q$ and $[B_q: H] \leq 2$ where $B_q^0$ is the connected component of the identity of $B_q$. The {\bf group of components } of $B_q$ is $\pi_0(B_q):= B_q/B_q^0$.

 \end{defn}
 \begin{rem} The trivial double cover $B/B_q \times \{0,1\}$ corresponds to the subgroup $H=B_q$ of index $1$.  The other double covers will be called {\bf proper}.

   \end{rem}
                      \begin{example} For $G = GL(3,\KK)$ we have $Q_3 = \{ x_1^2 + x_{1}x_{ 2} +  x_3^2, x_1^2 + x_{1}x_{ 3} +x_2^2\}$. The unipotent stabilizers are as follows:

     \begin{center}
     \begin{tabular}{cc}
       $q \in Q_3$ &   $B_{q}$ \\
       \hline
       \begin{tabular}{c} $ x_1^2 + x_{1}x_{ 2} +  x_3^2 $ \\ \raisebox{-.5\height}{\begin{tikzpicture}[scale=0.8]
  \tikzset{vertex/.style={draw,circle,inner sep=1pt, fill=black}}
   \tikzset{overtex/.style={draw,circle,inner sep=1pt}}
    
    \node[vertex,label=below:{\tiny $1$}] (1) at (0,0) { };
    \node[overtex,label=below:{\tiny $2$}] (2) at (1,0) { };
    \node[vertex,label=below:{\tiny $3$}] (3) at (2,0) { };

    \draw (1) to [out=30,in=150] (2);
   
  \end{tikzpicture}}\end{tabular}  & $\left\{ \begin{pmatrix} 1 & u_{1,2} & 0  \\ 0 & 1 & 0  \\ 0 & 0 & 1  \end{pmatrix} \ \ \mid \ \ u_{1,2} \in \{ 0,1 \} \ \right\}$ \\  & \\
  \begin{tabular}{c} $ x_1^2 + x_{1}x_{ 3} +x_2^2$ \\ \raisebox{-.5\height}{ \begin{tikzpicture}[scale=0.8]
  \tikzset{vertex/.style={draw,circle,inner sep=1pt, fill=black}}
   \tikzset{overtex/.style={draw,circle,inner sep=1pt}}
    
    \node[vertex,label=below:{\tiny $1$}] (1) at (0,0) { };
    \node[vertex,label=below:{\tiny $2$}] (2) at (1,0) { };
    \node[overtex,label=below:{\tiny $3$}] (3) at (2,0) { };
    \draw (1) to [out=30,in=150] (3);
\end{tikzpicture}}\end{tabular}    & $ \left\{ \begin{pmatrix} 1 & 0 & u_{1,3} \\ 0 & 1 &0 \\ 0 & 0 & 1  \end{pmatrix} \ \ \mid \   \ u_{1,3} \in \KK  \right\} $ \\   &  \\     \end{tabular}\end{center}
Therefore the orbit $B  ( x_1^2 + x_{1}x_{ 2} +x_3^2)$ has one proper double cover while $B   ( x_1^2 + x_{1}x_{ 3} +  x_2^2)$ has none.
     \end{example} 
   \begin{example} For $G = GL(4,\KK)$ we have $Q_4 = \{x_1^2 + x_{1}x_{ 2} +  x_3^2+ x_{3}x_{ 4},x_1^2 + x_{1 }x_{3} +x_2^2+ x_{2 }x_{4}\}$. The unipotent stabilizers are as follows:

     \begin{center}
     \begin{tabular}{cc}
       $q \in Q_4$ &   $B_{q}$ \\
       \hline
       \begin{tabular}{c} $ x_1^2 + x_{1}x_{ 2} +  x_3^2+ x_{3}x_{ 4} $ \\ \raisebox{-.5\height}{\begin{tikzpicture}[scale=0.8]
  \tikzset{vertex/.style={draw,circle,inner sep=1pt, fill=black}}
   \tikzset{overtex/.style={draw,circle,inner sep=1pt}}
    
    \node[vertex,label=below:{\tiny $1$}] (1) at (0,0) { };
    \node[overtex,label=below:{\tiny $2$}] (2) at (1,0) { };
    \node[vertex,label=below:{\tiny $3$}] (3) at (2,0) { };
    \node[overtex,label=below:{\tiny $4$}] (4) at (3,0) { };

    \draw (1) to [out=30,in=150] (2);
    \draw (3) to [out=30,in=150] (4);
\end{tikzpicture}} \end{tabular}  & $\left\{ \begin{pmatrix} 1 & u_{1,2} & 0 & 0 \\ 0 & 1 & 0 & 0 \\ 0 & 0 & 1 & u_{3,4} \\ 0 & 0 & 0 & 1 \end{pmatrix} \ \ \mid \ \ u_{1,2}, u_{3,4} \in \{ 0,1 \} \ \right\} $ \\   & \\
  \begin{tabular}{c} $ x_1^2 + x_{1}x_{ 3} +x_2^2+ x_{2}x_{ 4}$ \\ \raisebox{-.5\height}{\begin{tikzpicture}[scale=0.8]
  \tikzset{vertex/.style={draw,circle,inner sep=1pt, fill=black}}
   \tikzset{overtex/.style={draw,circle,inner sep=1pt}}
    
    \node[vertex,label=below:{\tiny $1$}] (1) at (0,0) { };
    \node[vertex,label=below:{\tiny $2$}] (2) at (1,0) { };
    \node[overtex,label=below:{\tiny $3$}] (3) at (2,0) { };
    \node[overtex,label=below:{\tiny $4$}] (4) at (3,0) { };

    \draw (1) to [out=30,in=150] (3);
    \draw (2) to [out=30,in=150] (4);
\end{tikzpicture}}\end{tabular}    & $ \left\{ \begin{pmatrix} 1 & 0 & u_{1,3} & a \\ 0 & 1 & a & u_{1,4} \\ 0 & 0 & 1 & 0 \\ 0 & 0 & 0 & 1 \end{pmatrix} \ \ \mid \  a\in\KK, \ u_{1,3}+u_{2,4} \in \{ 0,1\}  \right\} $ \\   &  \\     \end{tabular}\end{center}
Therefore the orbit $B  ( x_1^2 + x_{1}x_{ 3} +x_2^2+ x_{2}x_{ 4})$ has two double covers and the orbit $B   ( x_1^2 + x_{1}x_{ 2} +  x_3^2+ x_{3 }x_{4})$ has four covers.
\end{example}
\noindent
A direct consequence of Lemmas~\ref{lem-disconnect} and \ref{lem-unistab} is: 
     \begin{cor}\label{lem-numcomp} Let $q \in Q_n$. Let $f$ be the number of connected components in $q$ for $n$ even  or in $\tilde{q}$ for $n$ odd. Then the$$\mbox{ number of connected components in } B_q = \begin{cases} 2^f & n \mbox{ even} \\ 2^{f-1} & n \mbox{ odd} \end{cases}.$$
The group of components $\pi_0(B_q) \cong \BZ_2^f$. 
   \end{cor}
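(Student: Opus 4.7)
The plan is to combine Lemma~\ref{lem-disconnect} with Lemma~\ref{lem-unistab}: first I would decompose $B_q$ as a direct product over the ``left-to-right indecomposable'' pieces of $q$, then read off each factor's group of components from the stabilizer description. Concretely, write $q = q^{(1)}+\cdots+q^{(k)}$ with $\max\ind(q^{(a)})<\min\ind(q^{(a+1)})$ and each $q^{(a)}$ not further decomposable in this way. Iterating Lemma~\ref{lem-disconnect} gives $B_q\cong\prod_{a=1}^k B_{q^{(a)}}$, and since $\pi_0$ commutes with finite products, it suffices to determine each $\pi_0(B_{q^{(a)}})$.

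A chunk $q^{(a)}$ without a pure square is an LR-indecomposable matching on an even set of indices, hence connected in the paper's sense, so Lemma~\ref{lem-unistab}(3) enforces $\sum u_{i_k j_k}\in\{0,1\}$ while all other free entries range over an affine space. Thus $B_{q^{(a)}}$ is a disjoint union of two connected affine pieces, yielding $\pi_0(B_{q^{(a)}})\cong\BZ_2$.

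The key step, relevant only for $n$ odd, is the combinatorial claim that the unique pure square $x_{i_{r+1}}^2$ lies in the \emph{last} chunk $q^{(k)}$. By (C1), $i_{r+1}$ is the largest source index, so any vertex of $q$ strictly greater than $i_{r+1}$ must be a target $j_t$ paired with some $i_t<i_{r+1}$; the arc $(i_t,j_t)$ straddles $i_{r+1}$ and drags $j_t$ into the same LR-chunk, so no chunk lies to the right of the pure-square chunk. I expect this ``last-chunk'' claim to be the main technical step. It remains to see that this chunk contributes trivially to $\pi_0$: if $q^{(k)} = x_{i_{r+1}}^2$ alone then $b_{i_{r+1},i_{r+1}}^2 = 1$ forces $b_{i_{r+1},i_{r+1}} = 1$ (characteristic $2$, quadratically closed), so $B_{q^{(k)}}$ is trivial; otherwise the pure square is strictly interior to the chunk, placing us in the ``$\KK$'' case of Lemma~\ref{lem-unistab}(3), which imposes no discrete restriction, and $B_{q^{(k)}}$ is a connected affine group.

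To finish I would identify $k$ with $f$. For $n$ even this is tautological (LR-chunks of a non-nesting matching coincide with its connected components). For $n$ odd, forming $\tilde{q}$ attaches only the new vertex $n+1$ and the arc $(i_{r+1},n+1)$ to the terminal chunk $q^{(k)}$, so $\tilde{q}$ has the same number of LR-chunks as $q$. Multiplying contributions: for $n$ even each of the $k = f$ matching chunks contributes $\BZ_2$, giving $\pi_0(B_q)\cong\BZ_2^f$ of order $2^f$; for $n$ odd the $k-1 = f-1$ matching chunks each contribute $\BZ_2$ while the pure-square chunk contributes nothing, giving $\pi_0(B_q)\cong\BZ_2^{f-1}$ of order $2^{f-1}$, as claimed.
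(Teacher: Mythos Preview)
Your proposal is correct and follows exactly the route the paper indicates: the corollary is stated there as ``a direct consequence of Lemmas~\ref{lem-disconnect} and \ref{lem-unistab}'', and you carry out precisely that combination, supplying the details the paper omits (the product decomposition of $B_q$, the two-component count for each even connected factor via condition~(3), and the verification that the pure-square chunk is terminal and contributes trivially). Your conclusion $\pi_0(B_q)\cong\BZ_2^{f-1}$ for $n$ odd matches the stated component count $2^{f-1}$; the paper's final clause ``$\pi_0(B_q)\cong\BZ_2^f$'' appears to be intended only for the even case (or is a typo), as the $n=3$ example in the paper already shows.
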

  
   \begin{lem} Let $K_n$ be the set of double covers for all $q \in Q_n$. Let $M_n$ be the set of subsets of $\{1, \ldots, n\}$ with $\lfloor \frac{n}{2} \rfloor$ elements. Let
     $$Z_n:= \{ (q, \epsilon) \ | \ q \in Q_n,\ \epsilon \in \BF_2^{cc(f)}\ \}.$$ Then there are natural bijections
     $$K_n \xrightarrow{\rho} Z_n \xrightarrow{\pi} M_n.$$
   \end{lem}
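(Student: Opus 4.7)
The plan is to construct both maps explicitly: $\rho$ will arise essentially for free from the structure of $\pi_0(B_q)$ as an elementary abelian $2$-group, while $\pi$ will be built from a ``flip per connected component'' construction on diagrams, with the main combinatorial work being a cardinality identity that secures its bijectivity.

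For $\rho: K_n \to Z_n$ the key input is Corollary~\ref{lem-numcomp}, which identifies $\pi_0(B_q)$ with an elementary abelian $2$-group of rank $e(q)$, where $e(q)$ matches the exponent appearing in the definition of $Z_n$ (concretely $e(q) = f$ for $n$ even and $e(q) = f-1$ for $n$ odd, with $f$ the number of connected components of the diagram of $q$ or of $\tilde q$). Any double cover $H$ with $B_q^0 \subseteq H \subseteq B_q$ induces a unique character $\chi_H \in \widehat{\pi_0(B_q)} \cong \BF_2^{e(q)}$ whose kernel is $H/B_q^0$, with $H = B_q$ corresponding to the trivial character; conversely, any character recovers $H$ as the preimage of its kernel in $B_q$. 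Thus $H \mapsto (q,\chi_H)$ sets up the bijection between double covers over $B_q$ and $\BF_2^{e(q)}$, and taking the disjoint union over $q \in Q_n$ yields $\rho$.

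For $\pi: Z_n \to M_n$, let $C_1,\ldots,C_f$ be the connected components of the diagram of $q$ (or of $\tilde q$ when $n$ is odd). If $n$ is odd let $C^*$ be the unique component containing the virtual pair $(i_{r+1}, n+1)$; declare the remaining $e(q)$ components \emph{flippable} and label them $k = 1,\ldots,e(q)$. Given $(q,\epsilon) \in Z_n$ with $\epsilon = (\epsilon_k) \in \BF_2^{e(q)}$, set
\[
\pi(q,\epsilon) \,:=\, \bigcup_{k \text{ flippable}} S_k(\epsilon_k) \,\cup\, \bigl\{\, j_t \,:\, t \leq r \text{ and } (i_t,j_t) \in C^*\,\bigr\},
\]
where $S_k(0) = \{j_t : (i_t,j_t) \in C_k\}$ and $S_k(1) = \{i_t : (i_t,j_t) \in C_k\}$. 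Each of the $r = \lfloor n/2 \rfloor$ real pairs contributes exactly one vertex of $\{1,\ldots,n\}$, so $\pi(q,\epsilon) \in M_n$. An inverse can be described informally as follows: given $S \in M_n$, reconstruct the underlying non-nesting matching from $S$ (for $n$ odd append $n+1 \notin S$ to the ground set), read off $q$ via Lemma~\ref{lem-orbcatalan}, and determine each $\epsilon_k$ by testing whether $S$ restricted to $\mathrm{ind}(C_k)$ equals the set of right endpoints ($\epsilon_k=0$) or of left endpoints ($\epsilon_k=1$).

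The step I expect to be the main obstacle is showing that this reconstruction is unambiguous: for every $S \in M_n$, the recovered matching is such that $S$ restricted to each flippable component is uniformly a set of $j$'s or of $i$'s, never a mixture. Equivalently, one must establish the cardinality identity
\[
\sum_{q \in Q_n} 2^{e(q)} \,=\, \binom{n}{\lfloor n/2 \rfloor} \,=\, |M_n|,
\]
from which bijectivity follows together with the evident injectivity in each fiber of $Z_n \to Q_n$. My plan for this identity is induction on $n$: for $n = 2r$ even, decompose $q$ according to the pair containing vertex $1$ -- if this pair is $(1, 2\ell)$, the diagram on $\{2,\ldots,2\ell-1\}$ forms a connected non-nesting matching while the diagram on $\{2\ell+1,\ldots,2r\}$ is arbitrary -- recovering the recursion from Remark~\ref{rem-orbcatalan} on the left and Vandermonde's identity for the central binomial coefficient on the right; for $n$ odd the identity reduces to the even case via $\tilde q$, with the shift from $f$ to $f-1$ in the exponent accounted for by the non-flippable component $C^*$.
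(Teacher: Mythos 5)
Your $\rho$ and your forward map $\pi$ are, up to conventions, exactly the paper's: the paper also parametrizes the elements of $K_n$ over a fixed $q$ by the tuple of values $\epsilon_l$ of the per-component sums $\sum_k u_{i_k^{(l)},j_k^{(l)}}$ from Lemma~\ref{lem-unistab}, and also builds the subset by choosing, in each connected component, all left or all right endpoints according to $\epsilon_l$, with the component containing the virtual pair frozen when $n$ is odd. Your character formulation of $\rho$ is a cleaner way of saying this (and correctly yields $2^{e(q)}$ covers per orbit), and your two convention flips are harmless; in fact your odd-$n$ convention (keep the real right endpoints of $C^*$, giving $r$-element sets) matches the stated $\lfloor n/2\rfloor$ better than the paper's own proof, which silently switches to $(r{+}1)$-element sets for odd $n$.

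The genuine gap is in how you close bijectivity of $\pi$. First, ``evident injectivity in each fiber of $Z_n \to Q_n$'' plus the cardinality identity does not give bijectivity: you must also rule out $\pi(q,\epsilon)=\pi(q',\epsilon')$ for $q\neq q'$, and nothing in your counting touches that. Second, your ``equivalently'' is false: the identity $\sum_{q}2^{e(q)}=\binom{n}{\lfloor n/2\rfloor}$ cannot substitute for the unambiguity of the reconstruction, which is a structural claim about the sorted pairing, not a counting statement. What actually closes the argument (and is left implicit in the paper) is a direct proof of that claim: writing $m_1<\cdots<m_r$ and $\overline m_1<\cdots<\overline m_r$ and pairing $m_a$ with $\overline m_a$, monotonicity of both sequences forces $\min$'s and $\max$'s of the pairs to increase with $a$, so no two pairs nest; moreover two consecutive pairs with opposite designations (say $m_a<\overline m_a$ but $\overline m_{a+1}<m_{a+1}$) can overlap only if $\overline m_{a+1}<\overline m_a$, a contradiction, so they are disjoint and the matching decomposes between them. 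Hence the designation is constant on each connected component (components being consecutive intervals with equal $S$/complement counts, the global sorted pairing restricts to each interval), and on a single component the sorted pairing is the first-in-first-out pairing of openers with closers, i.e.\ the unique non-nesting matching with that opener set; applied to $\pi(q,\epsilon)$ it returns $(q,\epsilon)$. This gives a two-sided inverse and makes your counting identity a corollary rather than an input. If you insist on the counting route, the identity itself is true and your first-component decomposition proves it, but the binomial-side recursion you need is the first-return Catalan convolution $\binom{2r}{r}=2\sum_{l\ge 1}C_{l-1}\binom{2r-2l}{r-l}$, not Vandermonde, and you would still owe the global injectivity argument above. Finally, your informal inverse for odd $n$ is garbled: appending $n+1$ to the ground set leaves blocks of sizes $r$ and $r+2$; in your convention one must sorted-pair $\{1,\ldots,n\}\setminus S$ against $S\cup\{n+1\}$.
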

   \begin{proof} {\bf (Description of $\rho$)}
     Let ${\mathcal U} \in K_n$. Then ${\mathcal U} \subset B_q$ for a certain $q \in Q_n$. We describe $q = q^{(1)} + \cdots + q^{(f)}$ as a sum of its connected components $ q^{(1)}, \ldots ,  q^{(f)}$. Each such component $q^{(l)}$ is associated to a set of index pairs $\{(i_k^{(l)},j_k^{(l)}) \ | \ 1\leq k \leq d_l\}$. According to Lemma \ref{lem-unistab}, the subset ${\mathcal U}$ is uniquely determined by the relations
     $$\sum_ku_{i_k^{(l)},j_k^{(l)}} = \epsilon_l \ \ \ \mbox{ for } \epsilon_l \in \BF_2, \ \  1 \leq l \leq cc(f).$$
     We set $\epsilon = (\epsilon_1, \ldots, \epsilon_{cc(f)})$. The assignment ${\mathcal U} \leftrightarrow (q,\epsilon)$ is therefore clear.

     {\bf (Description of $\pi$)}
   For simplicity's sake, we first discuss how this works for $n = 2r$ even.
Let $(q, \epsilon) \in Z_n$ with $q = q^{(1)} + \cdots + q^{(f)}$ as a sum of its connected components $ q^{(1)}, \ldots ,  q^{(f)}$. Let $(i^{(l)}_1, j^{(l)}_1), \ldots, (i_{d_l}^{(l)},j_{d_l}^{(l)})$ be the index pairs for the connected component $q^{(l)}$ of $q$. Then we have
     $$ {\bf m}^{l} := \begin{cases} \{ i^{(l)}_1, \dots, i^{(l)}_{d_l} \} & \mbox{ if } \epsilon_l = 0  \\
       \{ j^{(l)}_1, \dots, j^{(l)}_{d_l} \}  & \mbox{ if } \epsilon_l = 1  \end{cases}$$
  Then ${\bf m} := \cup_{t = 1}^f{\bf m}^{t} \in M_n$.

     Conversely, every subset of $\{1, \ldots, n\} \in M_n$ of $n/2$ elements can be associated to a unique element $(q,\epsilon) \in Z_n$. Let ${\bf m} = \{ m_1, \ldots, m_r \}$ be such a subset where $1 \leq m_1 < \cdots < m_r \leq n $. With the complement subset $\overline{{\bf m}} := \{1, \ldots, n\} \backslash {\bf m}$ we have the following relation: if $m_a < \overline{m}_a$ then $(m_a, \overline{m}_a)$ is an index pair. Otherwise $(\overline{m}_a,m_a)$ is an index pair. The set of all index pairs determine a quadratic form $q =  q^{(1)} + \cdots + q^{(f)}$, where the $q^{(t)}$ are the various connected components. Then $\bm^{(t)} \subset {\bf m}$ is the set of indices corresponding to $q^{(t)}$. This subsequence $\bm^{(t)}$ corresponds to  the relation $$ \sum_{s } u_{\bm^{(t)}_s,\overline{\bm}^{(t)}_s}  = \begin{cases} 0 & \text{ if } \bm^{(t)}_{i}>\overline{\bm}^{(t)}_i \text{ for all }  i  \\
       1 & \text{ if } \bm^{(t)}_{i}<\overline{\bm}^{(t)}_i \text{ for all }  i  \end{cases} =: \epsilon_t.$$

    We now consider this mapping for $n = 2r+1$ odd. Using the same notation, we have that $\ind(q^{(f)})$ contains $i_{r+1}$ and therefore is associated to the condition  $$\mbox{ if } d_f>1 \mbox{ then } \sum_{a = 1}^{d_f} u_{i_a^{(l)},j_a^{(l)}} \in \KK $$ for the unipotent stabilizer. In particular, there is only one connected component coming from this relation and the associated subset  $m^{(f)} = \{ i_1^{(f)}, \ldots, i_{d_f}^{(f)} = i_{r+1}\}$.  There would be $2^{f-1}$ distinct subsets $m_{\epsilon}$ in which the $\epsilon_1, \ldots, \epsilon_{f-1} \in \{ 0,1\} $ vary and $\epsilon_f = 1$ is fixed.

     Conversely, given a sequence $m \subset \{1,\ldots, 2r+1\}$ with $r+1$ elements, we set $\overline{m} = \{1, \ldots, 2r+2\} \backslash m$ and the index pairs are determined as for $n$ even. The associated quadratic form $q$ is obtained by setting $x_{2r+2} = 0$. 
   \end{proof}
   \begin{cor}
 $$ |K_n| = {n \choose \lfloor \frac{n}{2} \rfloor}.$$
     \end{cor}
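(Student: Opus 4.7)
The plan is essentially to read off the count directly from the previous lemma. That lemma produces a chain of natural bijections
\[ K_n \xrightarrow{\rho} Z_n \xrightarrow{\pi} M_n, \]
so composing them gives a bijection $K_n \cong M_n$, and it suffices to count $M_n$. Since $M_n$ is, by definition, the set of subsets of $\{1,\ldots,n\}$ of size $\lfloor n/2 \rfloor$, its cardinality is $\binom{n}{\lfloor n/2 \rfloor}$, and this establishes the corollary.

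In writing this up, I would start by invoking the previous lemma to conclude $|K_n| = |M_n|$ without further argument, then simply recall the defining description of $M_n$ to get the binomial coefficient. There is really no obstacle here: all of the substance — matching a double cover with a pair $(q,\epsilon)$ via the relations $\sum_k u_{i_k^{(l)}, j_k^{(l)}} = \epsilon_l$ from Lemma~\ref{lem-unistab}, and then recording $\epsilon$ inside a subset of $\{1,\ldots,n\}$ of size $\lfloor n/2 \rfloor$ by selecting, in each connected component, either the ``left'' or ``right'' indices — has already been carried out in the proof of the previous lemma. The corollary is just the arithmetic consequence.

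If a small sanity check seems desirable in the write-up, I would note that this matches the small cases computed explicitly earlier: for $n=3$ we get $\binom{3}{1}=3$ double covers (one for each orbit in $Q_3$, noting only one of the two orbits actually has a proper double cover but both contribute a trivial cover counted once — wait, let me reconsider). Actually, the $K_n$ is the total set of double covers over all $q \in Q_n$, so for $n=3$ the count should be $\binom{3}{1}=3$: the orbit $B(x_1^2+x_1x_2+x_3^2)$ contributes two covers (trivial and proper, since $\pi_0(B_q) \cong \mathbb{Z}_2$), while the orbit $B(x_1^2+x_1x_3+x_2^2)$ contributes only the trivial cover (since its stabilizer is connected), giving $2+1=3$, consistent with the corollary. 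For $n=4$ we should get $\binom{4}{2}=6$, matching $4+2$ from the two orbits in $Q_4$. These sanity checks provide a useful verification but are not required for the proof itself, which is immediate from the preceding bijection.
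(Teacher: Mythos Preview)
Your proposal is correct and matches the paper's approach exactly: the paper states this corollary with no proof at all, treating it as an immediate consequence of the bijection $K_n \cong M_n$ established in the preceding lemma, together with the definition of $M_n$ as the set of $\lfloor n/2 \rfloor$-element subsets of $\{1,\ldots,n\}$. Your sanity checks for $n=3,4$ are a nice addition but, as you note, not needed for the argument.
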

   \begin{example} Let $m = \{1,3,4, 9,10\} \in M_{10}$. Then $\overline{m} = \{2,5,6,7,8 \}$. The index pairs of the associated quadratic form $q \in Q_{10}$ are $(i_1,j_1) = (1,2)$, $(i_2,j_2) = (3,5)$, $(i_3,j_3)=(4,6)$, $(i_4,j_4) = ( 7,9)$, and $(i_5,j_5) = (8,10)$ and its diagram is: 

     \[
        \begin{tikzpicture}[scale=0.8]
    \tikzset{vertex/.style={draw,circle,inner sep=1pt, fill=black}}
    \tikzset{overtex/.style={draw,circle,inner sep=1pt}}
    
    \node[vertex,label=below:{\tiny $1$}] (1) at (0,0) { };
    \node[overtex,label=below:{\tiny $2$}] (2) at (1,0) { };
    \node[vertex,label=below:{\tiny $3$}] (3) at (2,0) { };
    \node[vertex,label=below:{\tiny $4$}] (4) at (3,0) { };
     \node[overtex,label=below:{\tiny $5$}] (5) at (4,0) { };
    \node[overtex,label=below:{\tiny $6$}] (6) at (5,0) { };
    \node[vertex,label=below:{\tiny $7$}] (7) at (6,0) { };
    \node[vertex,label=below:{\tiny $8$}] (8) at (7,0) { };
     \node[overtex,label=below:{\tiny $9$}] (9) at (8,0) { };
    \node[overtex,label=below:{\tiny $10$}] (10) at (9,0) { };

    \draw (1) to [out=30,in=150] (2);
    \draw (3) to [out=30,in=150] (5);
    \draw (4) to [out=30,in=150] (6);
    \draw (7) to [out=30,in=150] (9);
     \draw (8) to [out=30,in=150] (10);
  \end{tikzpicture}
\] We see that $m = \{i_1^{(1)}, i_1^{(2)},i_2^{(2)}, j_1^{(3)}, j_2^{(3)}\}$, which is associated to the connected component of $B_q$ with conditions $$ u_{1,2} = 0, \ \ \ u_{3,5} + u_{4,6} = 0, \ \ \ u_{7, 9} + u_{8,10} = 1 .$$ The associated element in $Z_{10}$ is $(q, \epsilon)$ where $$ q = x_1^2+x_1x_2 + x_3^2+ x_3x_5 + x_4^2 + x_4x_6 + x_7^2 + x_7x_9 + x_8^2 + x_8x_{10} \ \ \mbox{ and } \ \ \epsilon = (0,0,1).$$
   \end{example}
 
     \begin{cor}\label{cor-act} The Weyl group $S_n $ acts on the set $K_n$ of connected components of the unipotent stabilizers $B_q$ for $q \in Q_n$.

   \end{cor}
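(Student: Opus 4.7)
The plan is to transport the obvious permutation action of $S_n$ on $M_n$ to $K_n$ via the bijections $K_n \xrightarrow{\rho} Z_n \xrightarrow{\pi} M_n$ established in the previous lemma. Since $S_n$ acts on $\{1,\ldots,n\}$, it acts on the set of subsets of cardinality $\lfloor n/2 \rfloor$, namely $M_n$, and this action is transitive through simple transpositions. Pulling this action back through $\pi \circ \rho$ gives a well-defined $S_n$-action on $K_n$; this is the action we claim.

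The substance of the corollary is to verify that this action agrees with the Weyl group action on double covers of Borel orbits described by Knop \cite{knop1995set}. Knop's action is built locally: for each simple reflection $s_i$, it is determined by the structure of the minimal parabolic orbit $P_iq$, and in particular by the conjugacy class of $\Phi(P_q)$ in $PGL(2,\KK)$. It therefore suffices to check that simple reflections act correctly, one case at a time, according to the four types $G_0, T_0, N_0, U_0$ appearing in Corollary~\ref{lem-pstab}. Because $q \in Q_n$ has $\ind(q) = \{1,\ldots,n\}$, the $G_0$ case does not arise here; the other three cases must be checked against the corresponding permutation of $\bm$ by $(i,i+1)$.

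Concretely, in the $N_0$ case $(i,i+1)$ is an index pair, so exactly one of $i,i+1$ lies in $\bm$; swapping them exchanges the two choices and this must coincide with Knop's involution on the fiber over $Bq$. In the $T_0$ case, the two elements $i,i+1$ lie in different connected components (either crossing or nested), and a direct computation of how $s_i$ transforms the defining relations $\sum_k u_{i_k^{(l)},j_k^{(l)}} = \epsilon_l$ shows that the induced action on the pair $(q,\epsilon)$ is exactly the rebracketing produced by permuting $i$ and $i+1$ in $\bm$. In the $U_0$ case, $s_iq=q'$ is itself normal with $\rk Bq' = \rk Bq - 1$; here $q \notin Q_n$ or $q' \notin Q_n$, and one checks that the boundary behavior of Knop's construction agrees with what happens to $\bm$ when $i$ and $i+1$ are interchanged.

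The main obstacle is the $T_0$ case, because here the crossing versus nesting configurations produce different rearrangements of index pairs in $q$, and one must carefully match the change of normal form under the Borel action used to re-normalize $s_i q$ with the corresponding change in the $\epsilon_l$ components. Once this bookkeeping is done, the braid relations $s_is_{i+1}s_i = s_{i+1}s_is_{i+1}$ and $s_is_j=s_js_i$ for $|i-j|\geq 2$ hold automatically, because on the $M_n$ side the action is simply the restriction of the tautological permutation action of $S_n$ on subsets of $\{1,\ldots,n\}$, which satisfies all relations of $S_n$ by construction.
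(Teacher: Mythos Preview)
Your first paragraph is exactly the paper's proof and is all that is needed: transport the tautological $S_n$-action on $M_n$ back through the bijection $\pi\circ\rho$ to $K_n$. That already defines an honest group action, since it is pulled back from one. The paper's only additional remark is that for $n$ odd the element $n+1\in\overline{\bm}$ is fixed by $S_n$, so the odd-case bookkeeping is consistent.

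Everything from your second paragraph on is not part of this corollary. The comparison with Knop's action is a \emph{separate} lemma immediately following the corollary in the paper; the corollary itself only asserts the existence of an $S_n$-action, not its identification with Knop's. By folding that later lemma into this proof you have both over-proved the statement and introduced errors in the case analysis. In particular, your description of the $T_0$ case is wrong: there $i$ and $i+1$ are both left endpoints (both in $\bm$) or both right endpoints (both in $\overline{\bm}$), so $s_i\bm=\bm$ and the cover is fixed; they need not lie in different connected components, and there is no ``rebracketing'' of the $\epsilon_l$. Your $U_0$ claim that ``$q\notin Q_n$ or $q'\notin Q_n$'' is also false: when $q$ has components such as $x_j^2+x_jx_i+x_{i+1}^2+x_{i+1}x_k$, the form $s_iq$ is again normal of maximal $B$-rank, so both lie in $Q_n$.

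In short: keep your first paragraph, drop the rest, and save the Knop comparison for the next statement.
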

   \begin{proof} The obvious action of $w \in S_n$ on the $\bm \in M_n$ is $w  \bm = (w(m_1), \ldots, w(m_{\lfloor \frac{n}{2} \rfloor}))$. We extend this action to $K_n$ by setting $w\cdot {\mathcal U} := w(\rho({\mathcal U}))$.  We note that, in the case that $n$ is odd, the $S_n$-action never switches $m^{(f)} = \{ i_1^{(f)}, \ldots, i_{d_f}^{(f)} = i_{r+1}\}$ to $\overline{\bm}^{(f)} = \{ j_1^{(f)}, \ldots, j_{d_f}^{(f)} = n+1\}$ because $S_n$ fixes the number $n+1$. In other words, this action fixes the single cover of $B_q$ associated to $\bm^{(f)}$.
   \end{proof}
  
   \begin{example} Let $G = SL(3,\KK)$. There are two non-degenerate quadratic forms of $B$-rank 2, namely   $q:= x_1^2 + x_{1}x_{ 2} +x_3^2$ and $p:= x_1^2 + x_{1}x_{ 3} +x_2^2$. The stabilizer $B_q$ has two components, each determined by a choice of $u_{1,2} \in \{ 0,1\}$.  The $2$-element sets associated to each component are $\{1,3\}, \{2,3\}$.  The stabilizer $U_p$ has one component, associated to the $2$-element set $\{1,2\}$. The Weyl group $S_3$ action is:
     
    \begin{center}
 \begin{tikzpicture}
                                                                          \node (1) at (0,0) { $\{1,3\}$ };
                                                                     \node (2) at (0,2) { $\{1,2\}$ };
                                                                     \node (3) at (4,0) { $\{2,3\}$ };
                                                                      \draw (1) -- node[right]{$s_2$}  (2);
                                                                       \draw (1) -- node[below]{$s_1$}  (3);
                                                                       \draw (2) -- node[below]{ }  (3);
                                                                       \end{tikzpicture} 
      \end{center} 

       \end{example}
   \begin{example} Let $G = SL(4,\KK)$. There are two non-degenerate quadratic forms of $B$-rank 3, namely   $q:= x_1^2 + x_{1 }x_{2} +x_3^2+ x_{3}x_{ 4}$ and $p:= x_1^2 + x_{1}x_{ 3} +x_2^2+ x_{2}x_{ 4}$. The stabilizer $B_q$ has four components, each determined by a choice of $u_{1,2}, u_{3,4} \in \{0,1\}$. The stabilizer $U_p$ has two components, each determined by $u_{1,3}+u_{2,4} \in \{ 0,1\}$. The $2$-element sets associated to each component is as follows:

     \begin{center}
     \begin{tabular}{cccc}
       $q= x_1^2 + x_{1}x_{ 2} +x_3^2+ x_{3}x_{ 4}$ & &  $p= x_1^2 + x_{1 }x_{3} +x_2^2+ x_{2}x_{ 4}$ & \\
       $u_{1,2}=0, u_{3,4} = 0$ & $\{1,3\}$ & $u_{1,3}+u_{2,4} = 0 $ & $\{1,2\}$ \\
        $u_{1,2}=0, u_{3,4} = 1$ & $\{1,4\}$ & $u_{1,3}+u_{2,4} = 1 $ & $\{3,4\}$ \\
       $u_{1,2}=1, u_{3,4} = 0$ & $\{2,3\}$ &  & \\
          $u_{1,2}=1, u_{3,4} = 1$ & $\{2,4\}$ &  & \end{tabular} \end{center}
       
      The Weyl group action is :

    \begin{center}
 \begin{tikzpicture}
                                                                          \node (1) at (0,0) { $\{1,3\}$ };
                                                                     \node (2) at (0,2) { $\{1,4\}$ };
                                                                     \node (3) at (4,0) { $\{2,3\}$ };
                                                                     \node (4) at (4,2) { $\{2,4\}$ };
                                                                     \node (5) at (8,0) { $\{1,2\}$ };
                                                                      \node (6) at (8,2) { $\{3,4\}$ };
                                                                      \draw (1) -- node[right]{$s_3$}  (2);
                                                                       \draw (1) -- node[below]{$s_1$}  (3);
                                                                     \draw (2) -- node[below]{$s_1$}  (4);
                                                                     \draw (4) -- node[right]{$s_3$}  (3);
                                                                     \draw (4) -- node[below]{$s_2$}  (6);
                                                                     \draw (1) to[bend right, edge label =$s_2$]    (5);
                                                                        \draw (5) --  node[right]{ }  (6);
\end{tikzpicture} 
\end{center} \end{example}
\begin{rem} One might wonder when the action of $S_n$ on $M_n$ mirrors that of $S_n$ on the set of all Borel orbits. Let $q \in Q_n$.  It is not hard to show that $s_iB  q = B  (s_iq)$ if $(s_iq)$ is normal. We look at an example when this is not the case: let $G = GL(2,\KK)$ and $q = x_1^2 + x_1x_2$. Then $s_1(q) = x_2^2 + x_1x_2$ which is neither normal nor of maximal rank. In particular $s_1B  q = B  (x_1x_2)$. There is, however, no problem at the level of $M_n$ because $s_1 \{1\} = \{2 \}$ so $s_1$ simply switches one orbit cover of $B  q$ for the other.

\end{rem}

  \begin{lem} The action of $S_n$ on $K_n$ given in Corollary~\ref{cor-act} is the same as the one described in Lemma 5.4 in \cite{knop1995set}.

  \end{lem}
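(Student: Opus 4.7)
The plan is to reduce to checking the claim one simple reflection at a time, and then to proceed by case analysis on the type of $\Phi(P_q)$ from Corollary~\ref{lem-pstab}. Since $S_n$ is generated by the simple reflections $s_i$, and Knop's action on double covers is defined on generators via the minimal parabolics $P_i$, it suffices to show, for each $s_i$ and each double cover ${\mathcal U} \subset B_q$ with $q \in Q_n$, that the combinatorial action $\bm \mapsto s_i \bm$ on the associated subset $\bm = \pi(\rho({\mathcal U})) \in M_n$ agrees with Knop's $s_i$-action transported along $\pi \circ \rho$. Knop's recipe (\cite{knop1995set}, Lemma~5.4) prescribes this action in terms of $\Phi(P_q) \in \{G_0, T_0, N_0, U_0\}$ together with the local $\mathbb{P}^1$-geometry of $P_i/B$.

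Lemma~\ref{lem-disconnect} and Corollary~\ref{lem-numcomp} guarantee that both $B_q$ and its group of components split as direct products across the connected components of the diagram of $q$; combined with the description of $\bm$ as a union $\bigcup_l \bm^{(l)}$ indexed by these connected components, this reduces every case to the single connected component containing $i$ and $i+1$. Because $\ind(q) = \{1,\ldots,n\}$ for every $q \in Q_n$, the trivial case $\Phi(P_q) \cong G_0$ does not arise, and only $N_0$, $T_0$, and $U_0$ need to be handled.

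In the $N_0$ case, $(i,i+1)$ is itself an index pair of the relevant connected component, exactly one of $i$ or $i+1$ lies in $\bm$, and Knop's $s_i$-action flips between the two double covers of $B_q$; under the formula for $\pi$ in the previous lemma this flip is precisely the combinatorial swap $i \leftrightarrow i+1$. In the $U_0$ case, $s_iq$ is already normal and distinct from $q$: Knop's action carries the unique double cover of $B_q$ to the unique double cover of $B_{s_iq}$, and a direct computation using Lemma~\ref{lem-unistab} shows that the associated subset is indeed $s_i\bm$.

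The main obstacle is the $T_0$ case, in which $s_iq$ is not normal and must be brought into normal form via Step~2 of the existence proof of Theorem~\ref{lem-borbit}.1, the crossing-to-nesting reduction. This renormalization can alter both the arc structure and the set of pure squares in the diagram, thereby changing the connected components and the component-wise $\epsilon$-data. The key step is to verify that the shift on the $\epsilon$-pattern dictated by Knop's $T_0$-recipe, applied to the renormalized form, produces exactly the pair $\pi^{-1}(s_i\bm) \in Z_n$. Once this identity is checked on each connected component intersecting $\{i,i+1\}$, the general statement follows by assembling the components.
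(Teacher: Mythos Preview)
Your overall plan — reduce to simple reflections and split into the $G_0/N_0/T_0/U_0$ cases of Corollary~\ref{lem-pstab} — is exactly the paper's approach, and your treatment of the $G_0$ and $N_0$ cases is correct. The problem is that you have the roles of the $T_0$ and $U_0$ cases reversed, and your proposed handling of the $T_0$ case does not work.

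For $q\in Q_n$, the $T_0$ case occurs precisely when $i$ and $i+1$ are both first indices $i_s,i_t$ or both second indices $j_s,j_t$ of $q$. Then $i$ and $i+1$ lie on the \emph{same} side of the bijection $\pi$, so $s_i\bm=\bm$ on the combinatorial side. On Knop's side, $Bq$ is the open $B$-orbit inside $P_iq$, and Knop's $T_0$-recipe fixes the double cover of the open orbit: $[x_0,\rho]\mapsto[x_0,\rho]$. There is nothing to renormalize. If you instead apply $s_i$ to $q$ and run Step~2 of Theorem~\ref{lem-borbit}.1 as you suggest, you obtain a form with $\epsilon_{i+1}=0$ (e.g.\ $x_i^2+x_ix_k+x_{i+1}x_j$), which has $B$-rank $n-1$ and so lies outside $Q_n$ altogether; this is one of the two closed $B$-orbits in $P_iq$, not the one Knop's action assigns. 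So the ``renormalize $s_iq$ and track the $\epsilon$-shift'' program cannot produce $\pi^{-1}(s_i\bm)=(q,\epsilon)$ and is the wrong mechanism here.

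Conversely, the $U_0$ case is the one where the underlying orbit actually changes: here $i$ is a second index and $i+1$ is a first index (not paired to each other), $s_iq$ is already normal and lies in $Q_n$, and Knop's action sends a double cover over $Bq$ to one over $B(s_iq)$. Your phrase ``the unique double cover of $B_q$'' is not right — there are $2^{\ccomp(q)}$ of them — and the content of the verification is that the transport of the $\epsilon$-data from $B_q$ to $B_{s_iq}$ matches $\bm\mapsto s_i\bm$, including the situation where the two arcs $(j,i)$ and $(i+1,k)$ lie in distinct connected components of $q$ that merge in $s_iq$. That bookkeeping is what you should spell out; the $T_0$ case, by contrast, is a one-line observation.
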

  \begin{proof} It suffices to show that the action of a simple reflection $s_i$ is the same in both Corollary~\ref{cor-act} and Lemma 5.4, \cite{knop1995set}. We compare it with the action given by the table from Lemma 5.4.

     \medskip\noindent
     {\bf Case $\bm \not= s_i\bm $ with $\bm_l = i$ and $\overline{\bm}_l = i+1$ (or vice versa): } The associated quadratic form $q$ has a summand of the form $x_i^2 + x_ix_{i+1}$.  The reflection maps the double cover for $Bq$ corresponding to the condition $u_{i, i+1} = 0$  to the double cover corresponding to $u_{i,i+1} = 1$. Notice that the tag coefficient $\epsilon_l$ changes from $0$ to $1$ (with everything else the same). In this case we have $\Phi(P_q) = N_0$. This same action is denoted by  $s_i: [x_1,\rho] \rightarrow [x_1,\epsilon\rho]$ in Lemma 5.4 in \cite{knop1995set}.
     
      \medskip \noindent
  {\bf Case $s_i \bm = \bm $: }  Then both $i, i+1$ are in $ \bm$ (or $\overline{\bm}$). So $s_i$ maps the corresponding double cover for $q$ onto itself. The quadratic form $q$ has summands of the form $$x_i^2 + x_ix_j + x^2_{i+1} + x_{i+1}x_k \ \ \ \mbox{ where } i+1 < j < k.$$ So $s_iq$ would have nested intervals, making it non normal. In this case $\Phi(P_q) = T_0$. The case $i,i+1 \in \overline{\bm}$ is analogous. This same action is denoted by $s_i: [x_0,\rho] \rightarrow [x_0,\rho]$ in Lemma 5.4 in \cite{knop1995set}.

  \medskip \noindent
  {\bf Case $s_i\bm \not= \bm$ and if $\bm_l = i$ then $\overline{\bm}_l \not= i+1$ (or vice versa):} In this case $s_i$ maps an orbit cover for $q$ to an orbit cover for $q'=s_iq $. This case occurs when $\bm_l = i$ and $\overline{\bm}_k = i+1$ (or vice versa) with $k \not= l$. The corresponding connected components of $q$ are of the form
  $$ x_j^2 + x_jx_i + x_{i+1}^2 + x_{i+1}x_k \ \ \ \mbox{ or } \ \ \ x_i^2 + x_ix_j + x_k^2 + x_kx_{i+1} \ \ \ \mbox{ for some } j,k.$$In this case $\Phi(P_q) = U_0$. This same action is denoted by  $s_i: [x_0,\rho] \rightarrow [x_{\infty},\rho]$  in Lemma 5.4 in \cite{knop1995set}.
 \end{proof}
  
 \begin{rem} One obvious avenue for further study would be the extension of the results from Section 4 to all normal quadratic forms $q$ (and not just the non-degenerate ones of maximal rank). The equations defining the stabilizer $B_q$ tend to be more complicated. The number of covers for $Bq$ equals the number of connected components in $q$ which contain a pure power $x_i^2$. In general, the results and proofs are much more technical and will therefore be presented in a future paper.

   \end{rem}
 
   \bibliographystyle{plain}
 \bibliography{quadbib}

@incollection {lee2016catalan,
    AUTHOR = {Lee, Kyu-Hwan and Oh, Se-jin},
     TITLE = {Catalan triangle numbers and binomial coefficients},
 BOOKTITLE = {Representations of {L}ie algebras, quantum groups and related
              topics},
    SERIES = {Contemp. Math.},
    VOLUME = {713},
     PAGES = {165--185},
 PUBLISHER = {Amer. Math. Soc., Providence, RI},
      YEAR = {(2018)},
      ISBN = {978-1-4704-3696-4},
       DOI = {10.1090/conm/713/14315},
       URL = {https://doi.org/10.1090/conm/713/14315},
}

@article {knop1995set,
    AUTHOR = {Knop, Friedrich},
     TITLE = {On the set of orbits for a {B}orel subgroup},
   JOURNAL = {Comment. Math. Helv.},
    VOLUME = {70},
      YEAR = {1995},
    NUMBER = {2},
     PAGES = {285--309},
      ISSN = {0010-2571,1420-8946},
       DOI = {10.1007/BF02566009},
       URL = {https://doi.org/10.1007/BF02566009},
}

@book{stanley2015catalan,
    AUTHOR = {Stanley, Richard P.},
     TITLE = {Catalan numbers},
 PUBLISHER = {Cambridge University Press, New York},
      YEAR = {2015},
     PAGES = {viii+215},
      ISBN = {978-1-107-42774-7; 978-1-107-07509-2},
       DOI = {10.1017/CBO9781139871495},
       URL = {https://doi.org/10.1017/CBO9781139871495},
}

@article{shapiro1976catalan,
    AUTHOR = {Shapiro, L. W.},
     TITLE = {A {C}atalan triangle},
   JOURNAL = {Discrete Math.},
    VOLUME = {14},
      YEAR = {1976},
    NUMBER = {1},
     PAGES = {83--90},
      ISSN = {0012-365X,1872-681X},
       DOI = {10.1016/0012-365X(76)90009-1},
       URL = {https://doi.org/10.1016/0012-365X(76)90009-1},
}

@article{BERNHART1973,
 AUTHOR = {Bernhart, Frank R.},
     TITLE = {Catalan, {M}otzkin, and {R}iordan numbers},
   JOURNAL = {Discrete Math.},
    VOLUME = {204},
      YEAR = {1999},
    NUMBER = {1-3},
     PAGES = {73--112},
      ISSN = {0012-365X,1872-681X},
       DOI = {10.1016/S0012-365X(99)00054-0},
       URL = {https://doi.org/10.1016/S0012-365X(99)00054-0},
}

@article{brion1986quelques,
   AUTHOR = {Brion, Michel},
     TITLE = {Quelques propri\'et\'es des espaces homog\`enes sph\'eriques},
   JOURNAL = {Manuscripta Math.},
    VOLUME = {55},
      YEAR = {1986},
    NUMBER = {2},
     PAGES = {191--198},
      ISSN = {0025-2611,1432-1785},
       DOI = {10.1007/BF01168684},
       URL = {https://doi.org/10.1007/BF01168684},
}

@article{vinberg1986complexity,
   AUTHOR = {Vinberg, \`E. B.},
     TITLE = {Complexity of actions of reductive groups},
   JOURNAL = {Funktsional. Anal. i Prilozhen.},
    VOLUME = {20},
      YEAR = {1986},
    NUMBER = {1},
     PAGES = {1--13, 96},
      ISSN = {0374-1990}
}

@article{hilton1991catalan,
    AUTHOR = {Hilton, Peter and Pedersen, Jean},
     TITLE = {Catalan numbers, their generalization, and their uses},
   JOURNAL = {Math. Intelligencer},
    VOLUME = {13},
      YEAR = {1991},
    NUMBER = {2},
     PAGES = {64--75},
      ISSN = {0343-6993,1866-7414},
       DOI = {10.1007/BF03024089},
       URL = {https://doi.org/10.1007/BF03024089},
}

@article{vera2014generalization,
    AUTHOR = {Vera-L\'opez, A. and Garc\'ia-S\'anchez, M. A. and Basova, O.
              and Vera-L\'opez, F. J.},
     TITLE = {A generalization of {C}atalan numbers},
   JOURNAL = {Discrete Math.},
    VOLUME = {332},
      YEAR = {2014},
     PAGES = {23--39},
      ISSN = {0012-365X,1872-681X},
       DOI = {10.1016/j.disc.2014.05.017},
       URL = {https://doi.org/10.1016/j.disc.2014.05.017},
}

@article{richardson1990bruhat,
    AUTHOR = {Richardson, R. W. and Springer, T. A.},
     TITLE = {The {B}ruhat order on symmetric varieties},
   JOURNAL = {Geom. Dedicata},
    VOLUME = {35},
      YEAR = {1990},
    NUMBER = {1-3},
     PAGES = {389--436},
      ISSN = {0046-5755,1572-9168},
       DOI = {10.1007/BF00147354},
       URL = {https://doi.org/10.1007/BF00147354},
}

@article{conrad2020bilinear,
  title={Bilinear forms},
  author={Conrad, Keith},
  journal={Advanced Linear Algebra},
  pages={299--350},
  year={2020}
}

@article{brion2001orbit,
    AUTHOR = {Brion, Michel},
     TITLE = {On orbit closures of spherical subgroups in flag varieties},
   JOURNAL = {Comment. Math. Helv.},
    VOLUME = {76},
      YEAR = {2001},
    NUMBER = {2},
     PAGES = {263--299},
      ISSN = {0010-2571,1420-8946},
       DOI = {10.1007/PL00000379},
       URL = {https://doi.org/10.1007/PL00000379},
}

   \end{document}